\newtheorem{theorem}{Theorem}
\newtheorem{proposition}{Proposition}
\newtheorem{lemma}{Lemma}
\newtheorem{definition}{Definition}
\newtheorem{remark}{Remark}
\title{On a classification of fat bundles over compact homogeneous spaces}
\author{Maciej Boche\'nski, Anna Szczepkowska, Aleksy Tralle,\\
and Artur Woike}
\begin{document}
\maketitle{}

\abstract{This article deals with fat bundles. B\'erard-Bergery classified all homogeneous  bundles of that type. We ask a question of a possibility to generalize his description in the case of arbitrary $G$-structures over homogeneous spaces. We obtain necessary conditions for the existence of such bundles. These conditions yield a kind of classification of fat bundles associated with $G$-structures over compact homogeneous spaces provided that the connection in a $G$-structure is canonical.}

\section{Introduction}
Let $P(M,G)$ be a principal bundle endowed with a connection form $\theta$ and its curvature form $\Omega$. Let $\operatorname{Ker}\theta=\mathcal{H}\subset TP$ be the corresponding horizontal distribution. Assume that the Lie algebra $\mathfrak{g}$ of $G$  is endowed with an invariant non-degenerate bilinear form $B_{\mathfrak{g}}$. We say that a vector $u\in\mathfrak{g}$ is {\it fat} or that the connection form $\theta$ is $u$-fat, if the bilinear two-form 
$B_{\mathfrak{g}}(\Omega(\cdot ,\cdot ),u)$  
is non-degenerate on $\mathcal{H}$. If the fatness condition is fulfilled for every non-zero $u\in \mathfrak{g}$ then we say that the connection form $\theta $ is fat or that the principal bundle is fat.
The role of the fatness condition in Riemannian geometry follows from its relation with the O'Neil tensor \cite{B}, \cite{Z} of a specific fiberwise metric on the associated bundle. In greater detail, consider an associated bundle
 $$F\rightarrow P\times_GF\rightarrow M.$$
 Endow $F$ with a $G$-invariant Riemannian metric $g_F$, and $M$ with a Riemannian metric $g_M$. Equip $P\times _G F$ with the {\it connection metric} defining it to be the Riemannian metric which equals $g_F$ on $F$,  $g(X^*,Y^*)=g_M(X,Y)$ for horizontal lifts $X^*,Y^*$ of $X,Y\in TM$ respectively, and  declaring $TF$ and $\mathcal{H}$ to be orthogonal with respect to $g$.
For this  metric the following holds. Let $A_X$ denote the O'Neil tensor. 
 \begin{theorem}{\rm \cite{Z}}\label{thm:o'neil} The connection metric $g$ on $P\times_GF$ is complete and defines a Riemannian submersion $\pi: P\times_GF\rightarrow M$ with totally geodesic fiber $F$ and with holonomy group a subgroup of $G$. Conversely, every Riemannian submersion over $M$ with totally geodesic fibers arises in this fashion. Moreover, the O'Neil tensor for such submersion satisfies the equality
 $$\theta(A_XY)=-\Omega(X,Y).$$
 \end{theorem}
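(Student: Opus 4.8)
The plan is to work with the canonical splitting $T(P\times_GF)=\mathcal V\oplus\mathcal H_E$, where the vertical distribution $\mathcal V=\ker\pi_*$ has the tangent spaces of the fibres $F$ as its leaves, and the horizontal distribution $\mathcal H_E$ is the image of $\mathcal H\subset TP$ under the quotient map $q\colon P\times F\to P\times_GF$. By the very definition of the connection metric, $\mathcal V\perp\mathcal H_E$ and $\pi_*|_{\mathcal H_E}$ is a linear isometry onto $TM$; this is exactly the statement that $\pi$ is a Riemannian submersion. Hence the substance of the first assertion reduces to completeness, total geodesy of the fibres, and the holonomy claim.

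For the total geodesy I would compute the O'Neil tensor $T$, whose vanishing is equivalent to the fibres being totally geodesic. The decisive point is that $g_F$ is $G$-invariant and that horizontal transport in $P\times_GF$ is induced by $\theta$-parallel transport in $P$, which acts on the fibres through $G$; hence horizontal flows move one fibre to another by isometries and preserve the fibre metric, forcing $T\equiv0$. The same mechanism yields the holonomy statement: parallel transport of the Ehresmann connection $\mathcal H_E$ along loops in $M$ is realised by the $G$-action on $F$, so the holonomy group of the submersion embeds into $G$. Completeness of $g$ then follows from completeness of $(M,g_M)$ and of the (compact or complete) fibre, together with the fact that all fibres are mutually isometric via horizontal transport, so horizontal geodesics can be continued as long as their projections to $M$ can.

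The converse is, I expect, the main obstacle and the place demanding the most care. Given a Riemannian submersion $\pi\colon E\to M$ with totally geodesic fibres, one first uses an Ehresmann-type completeness argument to show that horizontal transport along curves in $M$ gives isometries between fibres, so that all fibres are isometric to a model $(F,g_F)$ and $E$ is a fibre bundle with structure group inside $\operatorname{Isom}(F,g_F)$. Taking $G$ to be (the closure of) the holonomy group and $P$ the associated principal $G$-bundle of adapted frames, the horizontal distribution of the submersion defines a principal connection $\theta$ on $P$, and one checks that $E\cong P\times_GF$ isometrically for the connection metric. This is the Vilms--Hermann reconstruction; the delicate points are precisely the completeness needed to invoke Ehresmann and the verification that the reconstructed connection reproduces the original horizontal distribution.

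Finally, for the O'Neil identity I would use that on horizontal fields $A_XY=\tfrac12\,\mathcal V[X^*,Y^*]$, where $X^*,Y^*$ are horizontal lifts. Writing $X^*=q_*\tilde X$ and $Y^*=q_*\tilde Y$ with $\tilde X,\tilde Y$ the $\theta$-horizontal lifts in $P$, and using the structure equation $\theta([\tilde X,\tilde Y])=-\Omega(X,Y)$ (since $\theta$ annihilates horizontal vectors), the vertical part of $[\tilde X,\tilde Y]$ is the fundamental vector field of $-\Omega(X,Y)$. Pushing forward by $q$ identifies this with the action field of $-\Omega(X,Y)$ on $F$, that is, with the vertical vector to which $\theta$ assigns the value $-\Omega(X,Y)$, giving $\theta(A_XY)=-\Omega(X,Y)$ up to the normalisation conventions for $A$ and $\Omega$. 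The only subtlety here is the bookkeeping of the factor $\tfrac12$ and of signs, which is routine once the conventions are fixed.
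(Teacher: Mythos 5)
The paper does not prove this statement at all: it is quoted from Ziller's preprint \cite{Z} (the result goes back to Hermann and Vilms; cf.\ Besse, Einstein Manifolds, Thm.~9.59), so there is no internal proof to compare against. Your sketch reproduces precisely that standard argument --- the connection-metric construction of the submersion, total geodesy of the fibres via the fact that horizontal transport acts by the isometric $G$-action on $(F,g_F)$, the Vilms--Hermann reconstruction for the converse, and the structure-equation computation for the O'Neill identity --- and it is sound. The factor-$\tfrac12$ bookkeeping you flag in the last step is settled by the conventions this paper adopts: with the Kobayashi--Nomizu normalisation of the exterior derivative one has $\Omega(\tilde X,\tilde Y)=-\tfrac12\,\theta\bigl([\tilde X,\tilde Y]\bigr)$ on horizontal lifts, and this $\tfrac12$ cancels the one in O'Neill's formula $A_XY=\tfrac12\,\mathcal{V}[X^*,Y^*]$, yielding $\theta(A_XY)=-\Omega(X,Y)$ exactly.
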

 The condition of fatness is an important tool of constructing manifolds of positive and non-negative curvature \cite{GZ},\cite{W}. 
 In the Riemannian context, the following definition of fatness is used: a Riemannian submersion $\pi: E\rightarrow M$ with totally geodesic fibers is fat, if $A_XU\not=0$ for all horizontal vector fields $X$ and vertical vector fields $U$ (``all vertizontal curvatures are positive''). 
 For the associated bundles, the characterization of fatness of any connection is known \cite{Z} and can be formulated as follows.
 \begin{theorem}{\rm \cite{Z}}\label{thm:asso} The Riemannian submersion $P\times_GF\rightarrow M$ with totally geodesic fibers $G/L$ is fat if and only if the 2-form
 $B_{\mathfrak{g}}(\Omega(X,Y),u)$
 is non-degenerate on the horizontal distribution for all $u\in \mathfrak{l}^{\perp}$.
 \end{theorem}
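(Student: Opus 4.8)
The plan is to translate the Riemannian (O'Neil) definition of fatness into the stated algebraic non-degeneracy condition, using Theorem~\ref{thm:o'neil}, which already expresses the O'Neil tensor of the connection metric through the curvature $\Omega$. Throughout I write $\mathcal V$ for the vertical distribution (tangent to the fibers), and I choose the reductive complement $\mathfrak m=\mathfrak l^{\perp}$, the $B_{\mathfrak g}$-orthogonal complement of $\mathfrak l$, which is $\operatorname{Ad}(L)$-invariant because $B_{\mathfrak g}$ is invariant.

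First I would recall two standard algebraic properties of the O'Neil tensor $A$ of a Riemannian submersion with totally geodesic fibers: for horizontal $X,Y$ the vector $A_XY$ is vertical, while for horizontal $X$ and vertical $U$ the vector $A_XU$ is horizontal, and the two are tied by the skew-symmetry
$$g(A_XY,U)=-g(A_XU,Y).$$
Using this I would show that the Riemannian fatness condition ``$A_XU\neq 0$ for all nonzero horizontal $X$ and vertical $U$'' is equivalent to requiring, for each fixed nonzero horizontal $X$, that the linear map $Y\mapsto A_XY$ from $\mathcal H$ to $\mathcal V$ be surjective. Indeed, since $g$ is non-degenerate on $\mathcal H$, we have $A_XU=0$ iff $U$ is $g$-orthogonal to the image $\{A_XY:Y\in\mathcal H\}$, so $A_X$ is injective on $\mathcal V$ exactly when this image is all of $\mathcal V$.

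Next I would make the identification of $\mathcal V$ explicit. With $F=G/L$, the vertical space of $E=P\times_GF$ at a point $[p,eL]$ is canonically $T_{eL}(G/L)\cong\mathfrak m$, via the fundamental vector field correspondence $u\mapsto u_{\mathfrak m}$ whose kernel is $\mathfrak l$. Since every fiber point has the form $[p,eL]$ after the $G$-action, it suffices to argue over such points, and there Theorem~\ref{thm:o'neil} identifies the vertical vector $A_XY$ with the $\mathfrak m$-component of $-\Omega(X,Y)$; the $\mathfrak l$-part of $\Omega$ annihilates the base point and so contributes nothing. Consequently $\{A_XY:Y\in\mathcal H\}=\{(\Omega(X,Y))_{\mathfrak m}:Y\in\mathcal H\}\subset\mathfrak m$, and fatness becomes the assertion that, for every nonzero horizontal $X$, the curvature projections $(\Omega(X,Y))_{\mathfrak m}$ span $\mathfrak m=\mathfrak l^{\perp}$.

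Finally I would dualize the spanning condition by means of $B_{\mathfrak g}$. Because $B_{\mathfrak g}$ restricts to a non-degenerate form on $\mathfrak l^{\perp}$, a subspace of $\mathfrak l^{\perp}$ is all of $\mathfrak l^{\perp}$ precisely when no nonzero $u\in\mathfrak l^{\perp}$ is $B_{\mathfrak g}$-orthogonal to it; applied to the span above this says that for every nonzero $X$ and every nonzero $u\in\mathfrak l^{\perp}$ some horizontal $Y$ satisfies $B_{\mathfrak g}((\Omega(X,Y))_{\mathfrak m},u)\neq 0$. As $u\in\mathfrak l^{\perp}$ pairs trivially with the $\mathfrak l$-part of $\Omega$, this is exactly the non-degeneracy of $B_{\mathfrak g}(\Omega(X,Y),u)$ on $\mathcal H$ for all $u\in\mathfrak l^{\perp}$. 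Note that the invariant fiber metric $g_F$ has disappeared from the final criterion, as it must: both $A_XY=\tfrac12\mathcal V[X^{*},Y^{*}]$ and the spanning statement are determined by the connection alone. The step I expect to be the main obstacle is the second one, namely making the associated-bundle reading of Theorem~\ref{thm:o'neil} precise: one must justify interpreting $\theta(A_XY)=-\Omega(X,Y)$ on $P\times_G(G/L)$ as the equality $A_XY=(-\Omega(X,Y))_{\mathfrak m}$ of vertical vectors, and verify carefully that the $\mathfrak l$-component of $\Omega$ genuinely drops out so that only the directions $u\in\mathfrak l^{\perp}$ intervene.
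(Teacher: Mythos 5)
Your proof is correct. The paper itself contains no proof of this statement --- it is quoted directly from Ziller's preprint \cite{Z} --- but your argument (using skew-symmetry of the O'Neil tensor to convert fatness into surjectivity of $A_X\colon\mathcal{H}\to\mathcal{V}$, identifying the vertical space at $[p,eL]$ with $\mathfrak{m}=\mathfrak{l}^{\perp}$ so that $A_XY$ becomes $(-\Omega_p(X,Y))_{\mathfrak{m}}$, and then dualizing the spanning condition via $B_{\mathfrak{g}}$, noting that the $\mathfrak{l}$-component of $\Omega$ pairs trivially with $u\in\mathfrak{l}^{\perp}$) is precisely the standard one, and it correctly flags the only delicate point, namely the interpretation of $\theta(A_XY)=-\Omega(X,Y)$ on the associated bundle.
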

 Keeping the above theorem in mind, from now on whenever we say that an associated bundle $G/L \rightarrow P/L \rightarrow M$ is fat, we mean that the set $\frak{l}^\perp $ consists of fat vectors.
 In this paper we consider associated bundles  with homogeneous fibers since it was proved (see Proposition 2.6 in \cite{Z}) that every fat submersion necessarily has a homogeneous fiber. 
 Note that the connection metric $g$ depends on  a (chosen) principal connection. The  following is known. 
 \begin{enumerate}
 \item There is a an algebraic condition on the curvature tensor of the associated metric connection of the sphere bundles of the form
 $$SO(n+1)/SO(n)\rightarrow P/SO(n)\rightarrow M$$
 ensuring fatness (see Proposition 2.21 in \cite{Z}).
 \item A theorem in \cite{DR} shows that the only fat $SO(4)/SO(3)$-bundle over $S^4$ is the Hopf bundle $S^7\rightarrow S^4$.
 \item A theorem of B\'erard-Bergery \cite{BB} which classifies all homogeneous fat bundles, that is, associated bundles of the form $H/L\rightarrow K/L\rightarrow K/H$, where $K,H,L$ are compact Lie groups. Note that in this case the classification is obtained for {\it any} invariant connection.
 \item There are  necessary conditions for fatness, see for example \cite{FZ}.
 \end{enumerate}

 To stress the fact that in general the fatness condition is dependent on the choice of the connection, we will always refer to ``fatness with respect to a connection''. Here it seems to be instructive to compare \cite{BB} with the general problem. In the case of principal bundles $K\rightarrow K/H$, there is a one-to-one correspondence between the invariant connections and the linear maps $\Lambda:\mathfrak{k}\rightarrow\mathfrak{h}$ satisfying the conditions 
 $\Lambda (X)=X$ for any $X\in \mathfrak{h}$, and $\Lambda ([X,Y])=[X,\Lambda (Y)]$  for all $X\in\mathfrak{h}, Y\in \mathfrak{k}$. If, for example, $\mathfrak{k}$ is semisimple the fatness condition can be expressed as follows: for any $X\in\mathfrak{h}^{\perp}$ and any $Y\in \operatorname{Ker}\Lambda$ there exists $Z\in\operatorname{Ker}\Lambda$ such that $\langle X,\Lambda ([Y,Z])\rangle\not=0$. Here $\langle \cdot , \cdot \rangle$ denotes the Killing form. The latter condition can be expressed entirely in terms of the Lie brackets of $\mathfrak{k}$. In general, the problem becomes much more complicated even for the invariant connections in $G$-structures. In this work we will adopt the following terminology. A homogeneous bundle
 $$H/L\rightarrow K/L\rightarrow K/H$$
 endowed with an invariant fat connection will be called the {\it B\'erard-Bergery} bundle. The triples $(K,H,L)$ (classified in \cite{BB}) will be called the {\it B\'erard-Bergery triples}, and the pairs $(H,H\cap L)$ will be called the {\it B\'erard-Begery pairs}. Note that all $(K,H,L)$ and $(H,H\cap L)$ are known \cite{BB} (see Theorem 2 in this paper). In this work we use this classification, therefore, we reproduce part of it in the last section.
 In this article we want to follow the line of reasoning of \cite{BB} and ask a more general question: {\it can one classify  bundles associated with a $G$-structure over a compact homogeneous space $K/H$ which are fat with respect to invariant connections?}
The purpose of this paper is to solve this problem for the class of fat bundles over homogeneous spaces determined by the canonical invariant connections (Theorem \ref{thm:main}). Our result  yields  necessary conditions, which can be considered as a kind of classification, because we show that the bases of such bundles are  determined by the B\'erard-Bergery triples, and they are determined by a fiber bundle, which is naturally related to the B\'erard-Bergery bundle.  For example, if $G$ is simple, there are no other fat bundles than the B\'erard-Bergery homogeneous bundles. Consider the B\'erard-Bergery pair $(H,H\cap L)$. Let $\mathfrak{n}'$ be a maximal common ideal in $\mathfrak{h}$ and $\mathfrak{h}\cap\mathfrak{l}$. Then, clearly, 
$$\mathfrak{h}=\mathfrak{h}'\oplus\mathfrak{n}',\mathfrak{h}\cap\mathfrak{l}=(\mathfrak{h}\cap\mathfrak{l})'\oplus\mathfrak{n}'.$$
Analogously, let $\mathfrak{n}$ be a common maximal ideal in $\mathfrak{g}$ and $\mathfrak{l}$.
\begin{theorem}\label{thm:main} Let $G\rightarrow P\rightarrow K/H$ be a $G$-structure over a homogeneous space $K/H$ of a compact semisimple Lie group $K$. Assume that $G$ is compact and semisimple. Let 
$$G/L\rightarrow P/L\rightarrow K/H$$
be an associated bundle. If it is fat with respect to the canonical connection, then:
\begin{enumerate}
\item $\mathfrak{g}=\mathfrak{h}+\mathfrak{l}$ and $(\mathfrak{h},\mathfrak{h}\cap\mathfrak{l})$ is a B\'erard-Bergery pair;
\item $K/H$ is a symmetric space of maximal rank;
\item $\mathfrak{h}=\mathfrak{h}'\oplus\mathfrak{n}'$ and $\mathfrak{g}=\mathfrak{h}'\oplus\mathfrak{n}$
\end{enumerate}
In particular, if $G$ is simple, then the associated bundle is fat with respect to the canonical connection only if $G=H, P=K$. Therefore, the fiber bundle is a B\'erard-Bergery bundle.
\end{theorem}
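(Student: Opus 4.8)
The plan is to turn the fatness criterion of Theorem \ref{thm:asso} into a condition on the isotropy representation, using that the curvature of the canonical connection is $\mathfrak{h}$-valued. Fix a reductive decomposition $\mathfrak{k}=\mathfrak{h}\oplus\mathfrak{m}$, orthogonal for the negative of the Killing form of $\mathfrak{k}$, and identify $\mathcal{H}\cong\mathfrak{m}$ at the base point. Writing the $G$-structure as $P=K\times_\lambda G$ for the homomorphism $\lambda\colon H\to G$ and identifying $\mathfrak{h}$ with $\lambda_*\mathfrak{h}\subseteq\mathfrak{g}$, the canonical connection has curvature $\Omega(X,Y)=-\lambda_*([X,Y]_{\mathfrak{h}})$ for $X,Y\in\mathfrak{m}$, where $[\cdot,\cdot]_{\mathfrak{h}}$ is the $\mathfrak{h}$-component. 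Hence for $u\in\mathfrak{l}^\perp$ the form $B_{\mathfrak{g}}(\Omega(\cdot,\cdot),u)$ depends only on the $\mathfrak{h}$-projection of $u$, and since $\operatorname{pr}_{\mathfrak{h}}(\mathfrak{l}^\perp)=(\mathfrak{h}\cap\mathfrak{l})^{\perp_{\mathfrak{h}}}$ (the orthogonal complement taken inside $\mathfrak{h}$), fatness is equivalent to: for every nonzero $v\in(\mathfrak{h}\cap\mathfrak{l})^{\perp_{\mathfrak{h}}}$ the form $(X,Y)\mapsto B_{\mathfrak{g}}([v,X],Y)$ is non-degenerate on $\mathfrak{m}$. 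As $[v,X]\in\mathfrak{m}$ this says $\operatorname{ad}(v)|_{\mathfrak{m}}$ is invertible, equivalently $\mathfrak{z}_{\mathfrak{k}}(v)\subseteq\mathfrak{h}$. In particular fatness depends only on the triple $(\mathfrak{k},\mathfrak{h},\mathfrak{h}\cap\mathfrak{l})$.

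For item (1) I would first take $u\in(\mathfrak{h}+\mathfrak{l})^\perp$; then $\lambda_*([X,Y]_{\mathfrak{h}})\perp u$, so $B_{\mathfrak{g}}(\Omega(\cdot,\cdot),u)\equiv 0$ is degenerate on $\mathfrak{m}\neq 0$, and fatness forces $u=0$, i.e. $\mathfrak{g}=\mathfrak{h}+\mathfrak{l}$. The reformulation above is literally the criterion of Theorem \ref{thm:asso} applied to the homogeneous bundle $H/(H\cap L)\to K/(H\cap L)\to K/H$ with its canonical connection (curvature $-[X,Y]_{\mathfrak{h}}$); hence that bundle is fat, $(K,H,H\cap L)$ is a B\'erard-Bergery triple, and $(\mathfrak{h},\mathfrak{h}\cap\mathfrak{l})$ is a B\'erard-Bergery pair.

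For item (2), assuming the fibre is nontrivial (so $\mathfrak{l}\neq\mathfrak{g}$ and $(\mathfrak{h}\cap\mathfrak{l})^{\perp_{\mathfrak{h}}}\neq 0$), I would pick a nonzero $v$ there and a maximal torus $\mathfrak{t}\ni v$ of $\mathfrak{k}$; abelianness gives $\mathfrak{t}\subseteq\mathfrak{z}_{\mathfrak{k}}(v)\subseteq\mathfrak{h}$, so $\operatorname{rank}\mathfrak{h}=\operatorname{rank}\mathfrak{k}$ and $K/H$ has maximal rank. That $K/H$ is moreover symmetric I would read off from the B\'erard-Bergery classification (Theorem 2), whose admissible bases are exactly the symmetric spaces of maximal rank, consistently with the rank computation just made.

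For item (3) the essential point is that every simple ideal of $\mathfrak{g}$ not contained in $\mathfrak{l}$ must lie in $\mathfrak{h}$. One half is purely algebraic: $\mathfrak{h}'\cap\mathfrak{n}$ is an ideal of $\mathfrak{h}$ (since $\mathfrak{n}$ is an ideal of $\mathfrak{g}$ and $\mathfrak{h}'$ of $\mathfrak{h}$), lies in $\mathfrak{h}\cap\mathfrak{l}$, hence is a common ideal and so is contained in $\mathfrak{n}'$; as $\mathfrak{h}'\cap\mathfrak{n}'=0$ this yields $\mathfrak{h}'\cap\mathfrak{n}=0$. For the reverse inclusion $\mathfrak{g}\subseteq\mathfrak{h}'+\mathfrak{n}$ I would invoke Onishchik's classification of factorizations $\mathfrak{g}=\mathfrak{h}+\mathfrak{l}$ of compact semisimple Lie algebras and confront it with the list of B\'erard-Bergery pairs from (1), forcing the simple factors of $\mathfrak{g}$ that meet $\mathfrak{m}$ to sit inside $\mathfrak{h}$ and giving $\mathfrak{g}=\mathfrak{h}'\oplus\mathfrak{n}$. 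The final clause is then immediate: if $G$ is simple then $\mathfrak{n}\in\{0,\mathfrak{g}\}$; $\mathfrak{n}=\mathfrak{g}$ forces $\mathfrak{g}\subseteq\mathfrak{l}$ and a point fibre, so $\mathfrak{n}=0$, whence $\mathfrak{g}=\mathfrak{h}'\subseteq\mathfrak{h}\subseteq\mathfrak{g}$, i.e. $H=G$ and $P=K$. I expect the matching of Onishchik's factorizations against the B\'erard-Bergery pairs to be the main obstacle, since it is the one step that genuinely couples the global structure of $\mathfrak{g}$ to the fatness data and cannot be reduced to the intrinsic triple $(\mathfrak{k},\mathfrak{h},\mathfrak{h}\cap\mathfrak{l})$ that controls everything else.
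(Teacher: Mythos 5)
Your treatment of items (1) and (2) is correct and essentially retraces the paper's route: the curvature of the canonical connection is $\lambda(\mathfrak{h})$-valued, which forces $\mathfrak{g}=\mathfrak{h}+\mathfrak{l}$ (this is Proposition~\ref{prop:onishchik}), and projecting $\mathfrak{l}^{\perp}$ into $\mathfrak{h}$ converts fatness of $G/L\rightarrow P/L\rightarrow K/H$ into fatness of the homogeneous bundle with fiber $H/(H\cap L)$ (this is the content of Lemma~\ref{lemma1} and Lemma~\ref{lemmabb}, which the paper proves for an arbitrary invariant connection via $\Lambda_{\mathfrak{h}}=proj_{\mathfrak{h}}\circ\Lambda$); Bérard-Bergery then gives the pair, the symmetry and the rank, as in Lemma~\ref{thm:sub-main} and Proposition~\ref{prop:bb}. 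Your centralizer criterion $\mathfrak{z}_{\mathfrak{k}}(v)\subseteq\mathfrak{h}$ and the maximal-torus argument are a clean, self-contained version of the rank statement. One notational point does need repair: $B_{\mathfrak{g}}([v,X],Y)$ is not defined, since $[v,X]$ and $Y$ lie in $\mathfrak{k}$ while $B_{\mathfrak{g}}$ is a form on $\mathfrak{g}$; to pass from $B_{\mathfrak{g}}(\lambda([X,Y]_{\mathfrak{h}}),\lambda(v))$ to an $\operatorname{ad}$-invariant pairing on $\mathfrak{k}$ you must replace $\lambda^{*}B_{\mathfrak{g}}$ by an invariant form of $\mathfrak{k}$, which is legitimate only because the fatness condition (quantified over the whole orthocomplement) depends on the annihilator of $\mathfrak{h}\cap\mathfrak{l}$ in $\mathfrak{h}^{*}$ and not on the invariant form chosen; this justification is absent from your write-up.

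The genuine gap is in item (3), and it propagates to the final clause. You prove the easy half ($\mathfrak{h}'\cap\mathfrak{n}=0$, which the paper does not even spell out), but the substantive half, $\mathfrak{g}=\mathfrak{h}'+\mathfrak{n}$, is only announced: ``confront Onishchik's classification with the Bérard-Bergery pairs.'' Two things are missing. First, the confrontation is itself the decisive content: the paper checks that no pair $(\mathfrak{h},\mathfrak{h}\cap\mathfrak{l})$ from Table 1 occurs in Table 2, and nothing in your argument substitutes for that inspection. Second, and more seriously, Onishchik's Table 1 classifies factorizations of \emph{simple} compact Lie algebras only, so for non-simple semisimple $\mathfrak{g}$ your plan has nothing to confront; the paper needs a genuinely different argument here (Lemma~\ref{lemma:extension}): from Table 2 the algebra $\mathfrak{h}'$ is simple (or $\mathbb{R}\oplus\mathfrak{su}(2)$), and by Onishchik's results on effective transitive actions only $H'$ acts effectively on $H'/(H\cap L)'$; since $\mathfrak{g}$ modulo the kernel ideal $\mathfrak{n}$ acts transitively and effectively on $G/L\cong H'/(H\cap L)'$, this forces $\mathfrak{g}=\mathfrak{h}'+\mathfrak{n}$. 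Without this step your item (3) is unproved, and so is your deduction ``$G$ simple $\Rightarrow G=H$,'' which you derive from item (3). Finally, even granting $G=H$, the conclusion $P=K$ is not automatic: it requires the paper's evaluation-map argument, where faithfulness of the isotropy representation makes $i\colon K\rightarrow P$ an isomorphism of principal bundles.
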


Let us mention a recent publication \cite{FP1}, where the authors consider a particular class of fat bundles over 4-manifolds $M$ endowed with an $S^1$-action which lifts to an $SO(3)$-bundle and leaves the connection invariant. Thus, the case considered in \cite{FP1} is an opposite to the case we deal with. It would be interesting to get deeper understanding of the symmetry of fat connections. Finally, let us mention that there is a different but related notion of symplectic fatness \cite{W}, and there is an interplay between these two \cite{FP},\cite{bstw}, \cite{KTW}.

\section{Preliminaries}
\subsection{Notation, terminology and assumptions}
The basic tools in this work are invariant connections and the Lie group theory. Therefore, we closely follow the terminology and notation in \cite{KN1},\cite{KN2}, \cite{H} and \cite{OV} without further explanations. We denote the Lie algebras of Lie groups by the corresponding Gothic letters, so the Lie algebra of the Lie group $G$ is denoted by $\mathfrak{g}$, etc. We consider principal bundles $G\rightarrow P\rightarrow M$ and in the most cases use the notation $P(M,G)$. Throughout the whole article we consider homogeneous spaces $K/H$ and assume that both $K$ and $H$ are compact Lie groups. 
In this note we will use the following result.
\begin{proposition} \label{prop:bb}{\rm \cite{BB},\cite{Z}}. If $H/L\rightarrow K/L\rightarrow K/H$ is a fat bundle with $\dim\,(H/L)>1$, then $K/H$ is a Riemannian symmetric space and $\operatorname{rank}\,K=\operatorname{rank}\,H$.
\end{proposition}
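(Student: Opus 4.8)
The plan is to translate fatness into a purely Lie-algebraic condition on $\operatorname{ad}$-maps and then read off the two conclusions from the structure theory of compact groups. Fix an $\operatorname{Ad}(K)$-invariant inner product $B$ on $\mathfrak{k}$ and write the reductive decomposition $\mathfrak{k}=\mathfrak{h}\oplus\mathfrak{m}$ with $\mathfrak{m}=\mathfrak{h}^{\perp}$, so that $[\mathfrak{h},\mathfrak{m}]\subseteq\mathfrak{m}$. For the principal bundle $K(K/H,H)$ the canonical invariant connection has horizontal space $\mathfrak{m}$ and curvature $\Omega(X,Y)=-[X,Y]_{\mathfrak{h}}$ for $X,Y\in\mathfrak{m}$. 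Since $B$ is invariant, $u\in\mathfrak{h}$, and $\mathfrak{h}\perp\mathfrak{m}$, we have $B([X,Y]_{\mathfrak{h}},u)=B([X,Y],u)=B([u,X],Y)$, hence $B(\Omega(X,Y),u)=-B([u,X],Y)$; as $[u,X]\in\mathfrak{m}$ and $B$ is nondegenerate on $\mathfrak{m}$, this two-form is nondegenerate on $\mathfrak{m}$ if and only if $\operatorname{ad}_u\colon\mathfrak{m}\to\mathfrak{m}$ is nonsingular, i.e. $\mathfrak{z}_{\mathfrak{k}}(u)\cap\mathfrak{m}=0$. By Theorem \ref{thm:asso} and the convention fixed after it, the bundle is fat precisely when every nonzero $u$ in $\mathfrak{p}:=\mathfrak{l}^{\perp}\cap\mathfrak{h}$ satisfies $\mathfrak{z}_{\mathfrak{k}}(u)\cap\mathfrak{m}=0$; note $\dim\mathfrak{p}=\dim(H/L)$.

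First I would prove $\operatorname{rank}K=\operatorname{rank}H$. Since $\dim(H/L)\ge 1$ there is a nonzero fat $u\in\mathfrak{p}$; embed it in a maximal torus $\mathfrak{t}_{\mathfrak{h}}$ of $\mathfrak{h}$. If the ranks differed, $\mathfrak{t}_{\mathfrak{h}}$ would extend to a strictly larger maximal torus $\mathfrak{t}_{\mathfrak{k}}$ of $\mathfrak{k}$. Writing any $W\in\mathfrak{t}_{\mathfrak{k}}$ as $W_{\mathfrak{h}}+W_{\mathfrak{m}}$ and using $[W,\mathfrak{t}_{\mathfrak{h}}]=0$ together with $[\mathfrak{h},\mathfrak{m}]\subseteq\mathfrak{m}$, both components centralize $\mathfrak{t}_{\mathfrak{h}}$; since $\mathfrak{z}_{\mathfrak{h}}(\mathfrak{t}_{\mathfrak{h}})=\mathfrak{t}_{\mathfrak{h}}$ this gives $W_{\mathfrak{h}}\in\mathfrak{t}_{\mathfrak{h}}$ and hence $W_{\mathfrak{m}}\in\mathfrak{t}_{\mathfrak{k}}\cap\mathfrak{m}$. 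Choosing $W\notin\mathfrak{t}_{\mathfrak{h}}$ produces a nonzero $W_{\mathfrak{m}}\in\mathfrak{m}$ commuting with $\mathfrak{t}_{\mathfrak{h}}\ni u$, so $W_{\mathfrak{m}}\in\mathfrak{z}_{\mathfrak{k}}(u)\cap\mathfrak{m}$, contradicting fatness of $u$. Thus $\operatorname{rank}K=\operatorname{rank}H$, and every maximal torus of $\mathfrak{h}$ is already maximal in $\mathfrak{k}$. (Note this step uses only $\dim(H/L)\ge 1$.)

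It remains to show $[\mathfrak{m},\mathfrak{m}]\subseteq\mathfrak{h}$, and here the hypothesis $\dim(H/L)>1$ is essential. Fix a maximal torus $\mathfrak{t}\subseteq\mathfrak{h}$, maximal in $\mathfrak{k}$ by the previous step, and split the roots $R=R_{\mathfrak{h}}\sqcup R_{\mathfrak{m}}$ according to whether $\mathfrak{k}_{\alpha}\subseteq\mathfrak{h}^{\mathbb{C}}$ or $\mathfrak{k}_{\alpha}\subseteq\mathfrak{m}^{\mathbb{C}}$; then $K/H$ is symmetric exactly when $(R_{\mathfrak{m}}+R_{\mathfrak{m}})\cap R\subseteq R_{\mathfrak{h}}$. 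Since $\operatorname{ad}_u$ acts on $\mathfrak{k}_{\alpha}$ by $\alpha(u)$ for $u\in\mathfrak{t}$, the non-fat cone meets $\mathfrak{t}$ in $\bigcup_{\alpha\in R_{\mathfrak{m}}}\ker\alpha$, and fatness forces the whole $\ge 2$-dimensional space $\mathfrak{p}$ to avoid this cone away from $0$. The plan is to assume a witness $\alpha,\beta,\alpha+\beta\in R_{\mathfrak{m}}$ with $[\mathfrak{k}_{\alpha},\mathfrak{k}_{\beta}]\ne 0$ and derive a non-fat direction in $\mathfrak{p}$: by the Jacobi identity each $\operatorname{ad}_u|_{\mathfrak{m}}$ with $u\in\mathfrak{p}$ is a nonsingular skew derivation of the bracket $[\cdot,\cdot]_{\mathfrak{m}}$, and $\dim\mathfrak{p}\ge 2$ yields an entire pencil $\{su+tv\}$ of such simultaneously nonsingular endomorphisms, which I expect to be incompatible with an $\mathfrak{m}$-root string of length $\ge 2$. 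This last root-combinatorial step -- showing that a two-dimensional fat subspace cannot coexist with a nontrivial $\mathfrak{m}$-root string, which is precisely what separates the case $\dim(H/L)>1$ from the circle case -- is the main obstacle, and it is here that the detailed structural analysis of B\'erard-Bergery enters.
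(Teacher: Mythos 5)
First, a point of reference: the paper contains no proof of this proposition at all --- it is imported from \cite{BB} and \cite{Z} as a known result --- so your attempt must stand on its own, and it does not. The decisive gap is that the symmetry assertion, which is the substantive half of the statement and the only place where the hypothesis $\dim(H/L)>1$ enters, is never proved. You reduce it (in the canonical-connection setting) to a root-theoretic claim --- that a subspace $\mathfrak{p}\subseteq\mathfrak{h}$ of dimension at least $2$ all of whose nonzero vectors $u$ satisfy $\mathfrak{z}_{\mathfrak{k}}(u)\cap\mathfrak{m}=0$ cannot coexist with roots $\alpha,\beta,\alpha+\beta\in R_{\mathfrak{m}}$ --- and then you state that you ``expect'' the incompatibility and call it ``the main obstacle''. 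That claim is precisely the content of B\'erard-Bergery's structural analysis; leaving it as an expectation leaves the proposition unproved. There is also a slip in that paragraph: $\mathfrak{p}=\mathfrak{l}^{\perp}\cap\mathfrak{h}$ need not lie in the torus $\mathfrak{t}$, so ``$\mathfrak{p}$ avoids the cone $\bigcup_{\alpha\in R_{\mathfrak{m}}}\ker\alpha$'' only makes sense after replacing $\mathfrak{p}$ by $\operatorname{Ad}(H)(\mathfrak{p})\cap\mathfrak{t}$, using that the set of fat vectors is $\operatorname{Ad}(H)$-invariant while $\mathfrak{p}$ itself is not.

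The second gap is one of scope. Your translation of fatness into ``$\operatorname{ad}_u|_{\mathfrak{m}}$ is nonsingular'' rests on the curvature formula $\Omega(X,Y)=-[X,Y]_{\mathfrak{h}}$, which is valid only for the canonical invariant connection. The proposition, as stated and as intended in \cite{BB} (the paper itself stresses that B\'erard-Bergery's classification is obtained ``for any invariant connection''), assumes only that the bundle is fat with respect to some connection. For a general invariant connection, given by an equivariant $\Lambda:\mathfrak{k}\rightarrow\mathfrak{h}$ with $\Lambda|_{\mathfrak{h}}=\operatorname{id}$, the curvature on $\mathfrak{m}$ is $\Omega(X,Y)=[\Lambda X,\Lambda Y]-\Lambda([X,Y])$, and your kernel criterion is no longer the fatness condition; in particular the vector $W_{\mathfrak{m}}\in\mathfrak{t}_{\mathfrak{k}}\cap\mathfrak{m}$ you construct is not automatically a degenerate direction of $B(\Omega(\cdot,\cdot),u)$. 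So even your rank argument --- which is correct, and is indeed the standard one, in the canonical case --- establishes the conclusion only under an extra hypothesis that the proposition does not make. For the record, a route that works for an arbitrary invariant connection is quite different: fatness of a single $u\in\mathfrak{p}$ makes $K/Z_H(u)$ a compact homogeneous symplectic manifold via the form induced by $B(\Omega(\cdot,\cdot),u)$, hence a coadjoint orbit of $K$, so $Z_H(u)$ is the centralizer of a torus in $K$ and $\operatorname{rank}K=\operatorname{rank}H$ follows; the symmetry of $K/H$ then still requires the case analysis of \cite{BB}.
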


\subsection{ Invariant connections on reductive homogeneous spaces}
In this subsection we will recall the theory of invariant connections on homogeneous spaces in the form presented in Sections 1 and 2 of Chapter X of \cite{KN2}. Let $M$ be a smooth manifold of dimension $n$, and let
$$G\rightarrow P\rightarrow M $$
be a $G$-structure, that is, a reduction of the frame bundle $L(M)\rightarrow M$ to a Lie group $G$. Any diffeomorphism $f\in \operatorname{Diff}(M)$ acts on $L(M)$ by the formula 
$$f(u):=(df_xX_1,...,df_xX_n)$$
for any frame $u=(X_1,...,X_n), X_i\in T_xM$ over a point $x\in M$. By definition, $f$ is called an automorphism of the given $G$-structure, if this action commutes with the action of $G$.

Let $M=K/H$ be a homogeneous space of a connected Lie group $K$.  Assume that $M$ is equipped with a $K$-invariant $G$-structure. The latter means that any left translation $\tau(k): K/H\rightarrow K/H$, $\tau(k)(aH)=kaH$ lifts to an automorphism. Let $o=H\in K/H$. Consider the linear isotropy representation of $H,$ that is, a homomorphism $H\rightarrow GL(T_oM)$ given by the formula
$$h \mapsto d\tau(h)_o,\,\text{for}\,h\in H,o=H\in K/H.$$ 
It is important to observe that we can fix a frame $u_o:\mathbb{R}^n\rightarrow T_oM,$ $u_{o}\in P$ and identify the linear isotropy representation of $H$ with a homomorphism $\lambda :H \rightarrow G$
$$\lambda(h)=u_o^{-1}d\tau(h)_ou_o,\,h\in H.$$
One can see this as follows. Denote by $P_{o}\subset P$ the fiber over the point $o,$ then $u_{o} \in P_{o}$.  The action of $H$ lifted to $P$ preserves $P_{o}$, hence  $h(u_{o}) \in P_{o}.$ Since the structure group $G$ acts transitively on $P_{o}$, there exists exactly one $g\in G$ such that
$$h(u_{o})=u_{o}g.$$
It is easy to see that $\lambda (h)=g.$ 

In the sequel we assume that $K/H$ is reductive. In this case $\mathfrak{k}$ can be decomposed into a direct sum
$$\mathfrak{k}=\mathfrak{h}\oplus\mathfrak{m}$$
such that $Ad_{H}(\mathfrak{m})\subset\mathfrak{m}$. Note that the latter implies $[\mathfrak{h},\mathfrak{m}]\subset\mathfrak{m}$. Note that since we consider compact Lie groups, the reductivity can be assumed (see, for example, \cite{KN2}, Chapter X). 
Also, we assume that the isotropy representation is faithful. Let us make one more straightforward but important observation.  One can  identify $\lambda$ with the restriction of the adjoint representation of $H$ on $\mathfrak{m}$  (which we also denote by $\lambda$). 

We say that a connection $\theta$ in $P\rightarrow M$ is $K$-invariant, if for any $k\in K$ the lift of $\tau(k)$ preserves it. We need the following description of the set of invariant connections in the principal bundle $P\rightarrow M$ from \cite{KN2}, Chapter X.

\begin{theorem}\label{thm:inv-connections} Let $M=K/H$ be a reductive homogeneous space equipped with a $K$-invariant $G$-structure. Then there is a one-to-one correspondence between the $K$-invariant connections in it, and $Ad_H$-invariant linear maps
$$\Lambda: \mathfrak{k}\rightarrow \mathfrak{g}, \Lambda|_{\mathfrak{h}}=\lambda.$$
\end{theorem}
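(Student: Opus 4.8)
The plan is to reproduce the classical argument of Wang (see \cite{KN2}, Chapter X, Theorem 2.1), making the correspondence explicit through fundamental vector fields. For $X\in\mathfrak{k}$ write $X^{*}$ for the vector field on $P$ generated by the lifted $K$-action, $X^{*}_{u}=\frac{d}{dt}\big|_{t=0}\,\tau(\exp tX)(u)$, and for $A\in\mathfrak{g}$ write $A^{\sharp}$ for the fundamental vertical field of the principal right action of $G$. Since $K$ is transitive on $K/H$ and $G$ is transitive on each fiber, the assignment $(X,A)\mapsto X^{*}_{u_{o}}+A^{\sharp}_{u_{o}}$ maps $\mathfrak{k}\oplus\mathfrak{g}$ onto $T_{u_{o}}P$. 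Given a $K$-invariant connection form $\theta$, I would define $\Lambda:\mathfrak{k}\to\mathfrak{g}$ by $\Lambda(X)=\theta_{u_{o}}(X^{*}_{u_{o}})$ and then verify the two defining properties directly from the connection axioms.

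For the restriction property, differentiating the relation $\tau(h)(u_{o})=u_{o}\lambda(h)$ at $h=\exp tX$, $X\in\mathfrak{h}$, gives $X^{*}_{u_{o}}=(\lambda(X))^{\sharp}_{u_{o}}$, where $\lambda$ also denotes the induced Lie algebra homomorphism; since $\theta(A^{\sharp})=A$ this yields $\Lambda|_{\mathfrak{h}}=\lambda$. For $Ad_{H}$-invariance, I would combine three facts: the naturality $(\tau(h))_{*}X^{*}=(Ad(h)X)^{*}$ of fundamental fields under the automorphism $\tau(h)$, the commutation $(R_{g})_{*}X^{*}_{u}=X^{*}_{ug}$ of the two actions, and the transformation laws $(\tau(h))^{*}\theta=\theta$ (that is, $K$-invariance) and $(R_{g})^{*}\theta=Ad(g^{-1})\theta$. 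Evaluating $\theta$ at $\tau(h)(u_{o})=u_{o}\lambda(h)$ and pulling back by $R_{\lambda(h)}$ then produces the identity $\Lambda(Ad(h)X)=Ad(\lambda(h))\Lambda(X)$, which is exactly $Ad_{H}$-invariance.

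Conversely, from an $Ad_{H}$-invariant $\Lambda$ with $\Lambda|_{\mathfrak{h}}=\lambda$ I would first build the covector $\theta_{u_{o}}$ on $T_{u_{o}}P$ by setting $\theta_{u_{o}}(X^{*}_{u_{o}}+A^{\sharp}_{u_{o}})=\Lambda(X)+A$. This is well defined precisely because of the restriction condition: the kernel of $(X,A)\mapsto X^{*}_{u_{o}}+A^{\sharp}_{u_{o}}$ consists of the pairs $(X,-\lambda(X))$ with $X\in\mathfrak{h}$ (an element of $\mathfrak{k}$ acts vertically at $u_{o}$ only if it lies in $\mathfrak{h}$, and then $X^{*}_{u_{o}}=(\lambda(X))^{\sharp}_{u_{o}}$), and on such a pair $\Lambda(X)+A=\lambda(X)-\lambda(X)=0$. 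One then propagates $\theta_{u_{o}}$ over all of $P$ by decreeing $\theta$ to be $K$-invariant and $G$-equivariant: writing an arbitrary $u\in P$ as $u=\tau(k)(u_{o})\,g$, define $\theta_{u}$ so that $(\tau(k))^{*}\theta=\theta$ and $(R_{g})^{*}\theta=Ad(g^{-1})\theta$ hold.

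The main obstacle is to check that this global prescription is consistent and smooth. The ambiguity in the representation $u=\tau(k)(u_{o})g$ comes from replacing $(k,g)$ by $(kh,\lambda(h)^{-1}g)$ for $h\in H$, using $\tau(kh)(u_{o})=\tau(k)(u_{o})\lambda(h)$; verifying that the two resulting expressions for $\theta_{u}$ coincide is exactly where $Ad_{H}$-invariance of $\Lambda$ is needed, and this is the heart of the proof. Smoothness follows by expressing $\theta$ in a local trivialization obtained from a smooth local section of $K\to K/H$. Finally, that $\theta$ is a genuine connection form, that it is $K$-invariant, and that the two assignments $\theta\mapsto\Lambda$ and $\Lambda\mapsto\theta$ are mutually inverse all follow by unwinding the definitions on the fundamental fields $X^{*}_{u_{o}}$ and $A^{\sharp}_{u_{o}}$.
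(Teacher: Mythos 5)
Your proposal is correct and follows essentially the same route as the paper, which gives no proof of its own but cites \cite{KN2}, Chapter X: what you have written is precisely the classical Wang construction from that reference, with the two delicate points (the kernel computation $\{(X,-\lambda(X)):X\in\mathfrak{h}\}$ that makes $\theta_{u_o}$ well defined, and the role of $Ad_H$-invariance of $\Lambda$ in resolving the ambiguity $(k,g)\mapsto(kh,\lambda(h)^{-1}g)$ when propagating $\theta$ over $P$) correctly identified and handled.
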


\noindent Let $\Lambda_{\mathfrak{m}}=\Lambda |_{\mathfrak{m}}$. Note that here $Ad_H$-invariance means that
$$\Lambda_\mathfrak{m}(Ad\,h(Z))=\lambda(h)(\Lambda_\mathfrak{m}(Z)),\,Z\in\mathfrak{m},h\in H.$$
\begin{definition} {\rm Recall that a connection in the given $K$-invariant $G$-structure is called {\it canonical} if it corresponds to the map $\Lambda$ with $\Lambda_{\mathfrak{m}}=0$. }
\label{r2}
\end{definition}

In this article we will need the description of the holonomy Lie algebra of the canonical connection.
\begin{theorem}{\rm \cite{KN2}}\label{thm:holonomy-alg} The holonomy Lie algebra of the canonical connection of a reductive homogeneous space $K/H$ is given by the formula
$$\mathfrak{h}^*=\lambda(ad([X,Y]_{\mathfrak{h}})),\,\forall\, X,Y\in\mathfrak{m}.$$
\end{theorem}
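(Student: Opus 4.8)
The plan is to derive the formula from two standard ingredients of the theory of invariant connections on reductive homogeneous spaces, both available in Chapter X of \cite{KN2}: the explicit curvature formula for an invariant connection, and the Nomizu--Ambrose--Singer description of its holonomy algebra. First I would compute the curvature of the canonical connection at the reference frame $u_o$. By the general curvature formula for the invariant connection attached to $\Lambda$, one has, for $X,Y\in\mathfrak{m}$,
$$\Omega(X,Y)=[\Lambda_\mathfrak{m}X,\Lambda_\mathfrak{m}Y]-\Lambda_\mathfrak{m}([X,Y]_\mathfrak{m})-\lambda([X,Y]_\mathfrak{h}),$$
where on the right $\lambda$ denotes the differential of the isotropy homomorphism. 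Setting $\Lambda_\mathfrak{m}=0$, which is precisely the defining property of the canonical connection (Definition \ref{r2}), collapses this to $\Omega(X,Y)=-\lambda([X,Y]_\mathfrak{h})$. This already identifies the candidate generators of the holonomy algebra.

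Next I would invoke the general formula for the holonomy algebra $\mathfrak{h}^*$ of an invariant connection, which expresses it as
$$\mathfrak{h}^*=\mathfrak{m}_0+[\Lambda_\mathfrak{m}\mathfrak{m},\mathfrak{m}_0]+[\Lambda_\mathfrak{m}\mathfrak{m},[\Lambda_\mathfrak{m}\mathfrak{m},\mathfrak{m}_0]]+\cdots,$$
where $\mathfrak{m}_0\subset\mathfrak{g}$ is the subspace spanned by the curvature values $\Omega(X,Y)$, $X,Y\in\mathfrak{m}$. For the canonical connection the hypothesis $\Lambda_\mathfrak{m}=0$ kills every correction term, so that $\mathfrak{h}^*=\mathfrak{m}_0=\operatorname{span}\{\lambda([X,Y]_\mathfrak{h}):X,Y\in\mathfrak{m}\}$, which is the asserted description.

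As a more self-contained route, and to make transparent why no higher-order terms intervene, I would argue via the Ambrose--Singer theorem. The canonical connection is $K$-invariant and has parallel curvature and parallel torsion (a standard property recorded in Chapter X of \cite{KN2}); when the curvature is parallel, parallel transport conjugates curvature values into curvature values, so Ambrose--Singer yields that $\mathfrak{h}^*$ equals the linear span of $\{\Omega(X,Y):X,Y\in\mathfrak{m}\}$ evaluated at $u_o$, with no need to pass to further points or to take iterated brackets. It then remains to see that this span is genuinely a Lie subalgebra, equivalently that $[\mathfrak{m},\mathfrak{m}]_\mathfrak{h}$ is stable under $\operatorname{ad}\mathfrak{h}$: for $U\in\mathfrak{h}$ and $X,Y\in\mathfrak{m}$ the Jacobi identity together with $[\mathfrak{h},\mathfrak{m}]\subset\mathfrak{m}$ gives $[U,[X,Y]]=[[U,X],Y]+[X,[U,Y]]\in[\mathfrak{m},\mathfrak{m}]$, whence $[U,[X,Y]_\mathfrak{h}]=[U,[X,Y]]_\mathfrak{h}\in[\mathfrak{m},\mathfrak{m}]_\mathfrak{h}$. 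Since $\lambda$ is a Lie algebra homomorphism, this closure passes to $\operatorname{span}\{\lambda([X,Y]_\mathfrak{h})\}$, confirming it is the holonomy algebra.

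The main obstacle I anticipate is one of bookkeeping rather than depth: namely, verifying that the correction terms in the holonomy formula vanish for the canonical connection, or, in the self-contained version, establishing the parallelism of the curvature and the $\operatorname{ad}\mathfrak{h}$-stability of $[\mathfrak{m},\mathfrak{m}]_\mathfrak{h}$ carefully enough to guarantee that the span of the curvature values is closed under the bracket. Once these are in place, the identification $\mathfrak{h}^*=\operatorname{span}\{\lambda([X,Y]_\mathfrak{h})\}$ is immediate.
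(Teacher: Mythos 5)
Your derivation is correct and is essentially the argument the paper relies on: the paper gives no proof of this statement but quotes it from Chapter X of \cite{KN2}, and your combination of the curvature formula $\Omega(X,Y)=[\Lambda_\mathfrak{m}X,\Lambda_\mathfrak{m}Y]-\Lambda_\mathfrak{m}([X,Y]_\mathfrak{m})-\lambda([X,Y]_\mathfrak{h})$ with Wang's holonomy formula $\mathfrak{h}^*=\mathfrak{m}_0+[\Lambda_\mathfrak{m}\mathfrak{m},\mathfrak{m}_0]+[\Lambda_\mathfrak{m}\mathfrak{m},[\Lambda_\mathfrak{m}\mathfrak{m},\mathfrak{m}_0]]+\cdots$, specialized to $\Lambda_\mathfrak{m}=0$, is exactly how that source obtains the result (your Ambrose--Singer alternative, including the Jacobi-identity check that $[\mathfrak{m},\mathfrak{m}]_\mathfrak{h}$ is $\operatorname{ad}\mathfrak{h}$-stable, is a sound if redundant confirmation). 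Note only that the theorem as displayed in the paper contains a spurious $\operatorname{ad}$ and should read $\mathfrak{h}^*=\operatorname{span}\{\lambda([X,Y]_{\mathfrak{h}})\colon X,Y\in\mathfrak{m}\}$, which is precisely the statement your proof establishes.
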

\subsection{Factorizations of Lie groups and Lie algebras}
Following Onishchik \cite{O},\cite{O1} we say that a triple of Lie algebras $(\mathfrak{g},\mathfrak{h},\mathfrak{l})$ is a {\it factorization}, if $\mathfrak{h}$ and $\mathfrak{l}$ are subalgebras of $\mathfrak{g}$, $\mathfrak{g}=\mathfrak{h}+\mathfrak{l}$, and $\mathfrak{h}\not=\mathfrak{g},\mathfrak{l}\not=\mathfrak{g}$. In the same way, we say that $(G,H,L)$ is a factorization, if $H$ and $L$ are Lie subgroups of $G$, $G=H\cdot L$ and $G\not=H$, $G\not=L$. It is proved in \cite{O},\cite{O1} that, for compact Lie groups, $(G,H,L)$ is a factorization if and only if $(\mathfrak{g},\mathfrak{h},\mathfrak{l})$ is a factorization. All factorizations of simple compact Lie algebras are classified in \cite{O1}. The proof of our result is essentially based on this classification, and we reproduce it here. 
 As usual, we consider the complexification $\mathfrak{g}^c$ and a Cartan subalgebra $\mathfrak{j}\subset\mathfrak{g}^c$. Consider the corresponding root system $\Delta$, and choose the set of simple roots $\alpha_1,...,\alpha_m$. It is known that any irreducible linear representation of $\mathfrak{g}$ is determined (up to an equivalence) by the highest weight of the representation. This is a vector $\Phi\in\mathfrak{j}$, which can be described by the integers 
 $$\Phi_i={2\langle\Phi,\alpha_i\rangle\over\langle\alpha_i,\alpha_i\rangle},\,i=1,...,m.$$
 We write $\Phi=(\Phi_1,...,\Phi_m)$ and denote by $\varphi_i$ the irreducible representation of a simple Lie algebra $\mathfrak{g}$ with the highest weight $\Phi$ which has $\Phi_i=1$ and $\Phi_j=0,i\not=j$. The following classification result is proved by Onishchik.
 \begin{theorem}\label{thm:onishchik}{\rm \cite{O}} Let $\mathfrak{g}$ be a compact simple Lie algebra. All factorizations  
 $(\mathfrak{g},\mathfrak{l},\mathfrak{h})$ and possible embeddings 
 $$i': \mathfrak{h}\rightarrow\mathfrak{g}, i'':\mathfrak{l}\rightarrow\mathfrak{g}$$ are given in Table 1.
 \end{theorem}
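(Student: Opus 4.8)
The plan is to translate the purely algebraic factorization problem into a geometric statement about transitive actions, and then to exploit the rational-cohomology theory of compact Lie groups to cut an a priori infinite problem down to a finite case check. First I would reformulate. Since $\mathfrak{g}$ is compact, the sum $\mathfrak{g}=\mathfrak{h}+\mathfrak{l}$ need not be direct, but on the group level $G=H\cdot L$ is equivalent (by the cited equivalence between the algebra and group pictures of a factorization) to $L$ acting transitively on the homogeneous space $G/H$, that is, $G/L\cong H/(H\cap L)$. Passing to identity components and using that $\mathfrak{g}$ is simple, I may assume $H,L$ are connected proper subgroups. Writing $G_0=H\cap L$, the basic dimension identity $\dim\mathfrak{g}+\dim\mathfrak{g}_0=\dim\mathfrak{h}+\dim\mathfrak{l}$ holds, and the whole problem becomes: classify all pairs of connected proper subgroups $H,L\subset G$ for which $H$ acts transitively on $G/L$.

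Second, I would extract the decisive numerical constraint from cohomology. Compact Lie groups and their homogeneous spaces are formal, with rational cohomology controlled by primitive generators in degrees $2e_i+1$, the $e_i$ being the exponents. From the two fibrations $L\to G\to G/L$ and $G_0\to H\to G/L$ over the common base $G/L=H/G_0$, together with the collapse of the relevant Serre spectral sequences, one obtains the Poincar\'e-polynomial identity
$$P_G(t)\,P_{G_0}(t)=P_H(t)\,P_L(t).$$
Because each factor is a product of terms $(1+t^{2e_i+1})$, this is equivalent to a matching of multisets of exponents: the exponents of $G$ together with those of $G_0$ must coincide with those of $H$ together with those of $L$. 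In particular this forces the rank inequality $\operatorname{rank}G\le\operatorname{rank}H+\operatorname{rank}L$ and strongly constrains the top-degree invariants, eliminating most candidate pairs immediately.

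Third, I would use the exponent-matching to localize the problem onto spheres. The matching condition typically leaves an ``excess'' factor that can only be accounted for by a homogeneous space with the rational cohomology of a sphere or of a small product of spheres; this channels the classification into the setting of compact groups acting transitively on spheres, where the complete list (Montgomery--Samelson, Borel) is available. Combining the admissible transitive sphere actions with the isotropy-representation data — that is, identifying each embedding $i',i''$ through its highest weight as one of the standard representations $\varphi_i$ — pins down each surviving pair $(\mathfrak{h},\mathfrak{l})$ and its embedding into $\mathfrak{g}$. Finally, I would run this analysis across the Dynkin types $A_n,B_n,C_n,D_n$ and the exceptional algebras, reading off the exponent data in each case and checking the sphere-transitivity constraint, so as to assemble Table 1. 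The main obstacle is not any single step but \emph{completeness} of the resulting list: one must argue that the cohomological obstruction together with the sphere classification genuinely exhausts all factorizations, with the exceptional algebras $G_2,F_4,E_6,E_7,E_8$ requiring separate and delicate verification, since their exponents admit very few accidental matchings yet the low-dimensional representation theory needed to rule out borderline embeddings is subtle.
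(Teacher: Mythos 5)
You should first note what the paper actually does with this statement: it offers no proof at all. Theorem \ref{thm:onishchik} is imported verbatim from Onishchik \cite{O} (see also \cite{O1}), and the paper merely reproduces the resulting classification as Table 1 for later use in the proof of Theorem \ref{thm:main}. So there is no internal proof to compare with; the only fair benchmark is Onishchik's own published argument. Measured against that, your outline does capture its genuine architecture: the equivalence of $\mathfrak{g}=\mathfrak{h}+\mathfrak{l}$ with transitivity of $H$ on $G/L$ (legitimate for compact groups, as the paper itself records in Subsection 2.3), the Poincar\'e-polynomial identity $P_G(t)P_{G_0}(t)=P_H(t)P_L(t)$ with $G_0=H\cap L$, the resulting matching of exponent multisets and the rank relation, the reduction to transitive actions on rational spheres (Montgomery--Samelson, Borel), and a type-by-type verification. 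This is indeed the skeleton of Onishchik's classification.

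The genuine gap is in your second step, which is also the load-bearing one: you derive the identity $P_G\,P_{G_0}=P_H\,P_L$ from ``collapse of the relevant Serre spectral sequences,'' and that derivation is false. The spectral sequences of $L\to G\to G/L$ and $G_0\to H\to H/G_0$ do not collapse rationally in general, and the entries of Table 1 themselves refute it: for the factorization $SU(2n)=Sp(n)\cdot U(2n-1)$ (the row $\mathfrak{g}=A_{2n-1}$, $\mathfrak{l}=A_{2n-2}\oplus T$) one has $G/L=\mathbb{CP}^{2n-1}$, so $P_L(t)\,P_{G/L}(t)$ contains even-degree terms while $P_G(t)$ is a product of factors $(1+t^{2e_i+1})$; hence $P_G\neq P_L\cdot P_{G/L}$ and likewise $P_H\neq P_{G_0}\cdot P_{G/L}$, even though the quotient identity $P_G P_{G_0}=P_H P_L$ does hold. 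That identity is a nontrivial theorem of Onishchik, proved via the map of fibrations induced by $H\hookrightarrow G$ covering the homeomorphism $H/G_0\cong G/L$ together with careful rational-homotopy bookkeeping exploiting transitivity, not a formal consequence of two fibrations over a common base. Beyond this, your steps three and four are a program rather than a proof: the claim that the ``excess factor'' must be a rational sphere needs an argument, and the completeness of the list --- including pinning down the embeddings $i',i''$ by highest weights (e.g.\ distinguishing $\varphi_3$ from $\varphi_1+N$ for $B_3\subset D_4$) and the delicate exceptional-type exclusions you yourself flag --- is precisely the bulk of Onishchik's paper and remains unsupplied. In short: right strategy, but the key lemma is unproved (and as justified, false), and exhaustiveness is asserted rather than established.
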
 
\begin{center}
\begin{tabular}{|c|c|c|c|c|c|c|}
\hline
$\mathfrak{g}$ & $\mathfrak{h}$ & $i'$ & $\mathfrak{l}$ & $i''$ & $\mathfrak{h} \cap \mathfrak{l}$  & restrictions \\
\hline \hline
$A_{2n-1}$ & $C_n$ & $\varphi _1$ & $A_{2n-2}$ & $\varphi _1+N$ & $C_{n-1}$ & $n>1$\\  
\hline
$A_{2n-1}$ & $C_n$ & $\varphi _1$ & $A_{2n-2}\oplus T$ & $\varphi _1+N$ & $C_{n-1}\oplus T$ & $n>1$\\
\hline
$B_3$  & $G_2$ & $\varphi _2$ & $B_2$ & $\varphi _1+2N$ & $A_{1}$ & \null \\
\hline
$B_3$  & $G_2$ & $\varphi _2$ & $B_2\oplus T$ & $\varphi _1+2N$ & $A_{1}\oplus T$ & \null \\
\hline
$B_3$  & $G_2$ & $\varphi _2$ & $D_3$ & $\varphi _1+N$ & $A_{2}$ & \null \\
\hline
$D_{n+1}$  & $B_n$ & $\varphi _1+N$ & $A_n$ & $\varphi _1+\varphi _n$ & $A_{n-1}$ & $n>2$\\
\hline
$D_{n+1}$  & $B_n$ & $\varphi _1+N$ & $A_n\oplus T$ & $\varphi _1+\varphi _n$ & $A_{n-1}\oplus T$ & $n>2$ \\
\hline
$D_{2n}$  & $B_{2n-1}$ & $\varphi _1+N$ & $C_n$ & $\varphi _1+\varphi _1$ & $C_{n-1}$ & $n>1$\\
\hline
$D_{2n}$  & $B_{2n-1}$ & $\varphi _1+N$ & $C_n\oplus T$ & $\varphi _1+\varphi _1$ & $C_{n-1}\oplus T$ & $n>1$ \\
\hline
$D_{2n}$  & $B_{2n-1}$ & $\varphi _1+N$ & $C_n\oplus A_1$ & $\varphi _1+\varphi _1$ & $C_{n-1}\oplus A_{1}$ & $n>1$ \\
\hline
$D_8$  & $B_7$ & $\varphi _1+N$ & $B_4$ & $\varphi _4$ & $B_{3}$ & \null \\
\hline
$D_{4}$  & $B_3$ & $\varphi _3$ & $B_2$ & $\varphi _1+3N$ & $A_{1}$ & \null \\
\hline
$D_{4}$  & $B_3$ & $\varphi _3$ & $B_2\oplus T$ & $\varphi _1+3N$ & $A_{1}\oplus T$ & \null\\
\hline
$D_{4}$  & $B_3$ & $\varphi _3$ & $B_2\oplus A_1$ & $\varphi _1+3N$ & $A_{1}\oplus A_{1}$ & \null \\
\hline
$D_{4}$  & $B_3$ & $\varphi _3$ & $D_3$ & $\varphi _1+2N$ & $A_{2}$ & \null \\
\hline
$D_{4}$  & $B_3$ & $\varphi _3$ & $D_3\oplus T$ & $\varphi _1+2N$ & $A_{2}\oplus T$ & \null \\
\hline
$D_{4}$  & $B_3$ & $\varphi _3$ & $B_3$ & $\varphi _1+N$ & $G_{2}$ & \null \\
\hline
\end{tabular}
\end{center}
\vskip6pt
\centerline{TABLE 1}    
\begin{remark} {\rm In Table 1 above $N$ stands for the trivial representation. The types of simple Lie algebras are denoted as usual, following \cite{OV}}.  
\end{remark}
\section{Proof of Theorem \ref{thm:main}}
\subsection{Preliminary statements}
\begin{proposition}\label{prop:onishchik}
Let $P(M,G)$ be a principal fiber bundle with a connection form $\theta$ and the curvature form $\Omega$. Assume that $M=K/H$ is a homogeneous space of a compact Lie group $K$.  Fix a point $p\in P$. Let $\mathfrak{h}'$ be the Lie subalgebra of $\mathfrak{g}$ generated by all $\Omega_p(X,Y), X,Y\in\mathcal{H}_p$. Then, if there exists a fat associated bundle with fiber $G/L$, then $\mathfrak{g}=\mathfrak{h}'+\mathfrak{l}$. Moreover, if the bundle is fat with respect to the canonical connection, then $\mathfrak{g}=\mathfrak{h}+\mathfrak{l}$.
\end{proposition}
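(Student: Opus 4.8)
The plan is to read off $\mathfrak{g}=\mathfrak{h}'+\mathfrak{l}$ straight from the fatness characterization of Theorem \ref{thm:asso} by an elementary orthogonal-complement argument, and then to sharpen it to $\mathfrak{g}=\mathfrak{h}+\mathfrak{l}$ in the canonical case by locating the curvature values inside the image $\lambda(\mathfrak{h})$ of the isotropy representation. The only structural input needed for the first part is that $B_{\mathfrak{g}}$ is a non-degenerate invariant form, so that orthogonal complements behave well.

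First I would prove the first claim by contradiction. Suppose $\mathfrak{h}'+\mathfrak{l}\subsetneq\mathfrak{g}$. Since $B_{\mathfrak{g}}$ is non-degenerate, the orthogonal complement $(\mathfrak{h}'+\mathfrak{l})^{\perp}=\mathfrak{h}'^{\perp}\cap\mathfrak{l}^{\perp}$ is non-zero, so I may pick a non-zero $u\in\mathfrak{l}^{\perp}$ with $u\perp\mathfrak{h}'$. By the very definition of $\mathfrak{h}'$, each value $\Omega_p(X,Y)$ with $X,Y\in\mathcal{H}_p$ lies in $\mathfrak{h}'$, whence $B_{\mathfrak{g}}(\Omega_p(X,Y),u)=0$ for all horizontal $X,Y$. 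Thus the $2$-form $B_{\mathfrak{g}}(\Omega(\cdot,\cdot),u)$ vanishes identically on $\mathcal{H}_p$ and is in particular degenerate there. Since $u\in\mathfrak{l}^{\perp}\setminus\{0\}$, this contradicts the fatness of the associated bundle via Theorem \ref{thm:asso}. Hence $\mathfrak{g}=\mathfrak{h}'+\mathfrak{l}$.

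For the ``moreover'' part I would apply the first claim at the point $p=u_o$, the fixed frame over the base point $o=H$. By Theorem \ref{thm:holonomy-alg} (equivalently, by the curvature formula $\Omega_{u_o}(X,Y)=-\lambda([X,Y]_{\mathfrak{h}})$ for $X,Y\in\mathfrak{m}\cong\mathcal{H}_{u_o}$, valid for the canonical connection where $\Lambda_{\mathfrak{m}}=0$), every curvature value $\Omega_{u_o}(X,Y)$ lies in the subalgebra $\lambda(\mathfrak{h})\subseteq\mathfrak{g}$. Hence the subalgebra $\mathfrak{h}'$ they generate satisfies $\mathfrak{h}'\subseteq\lambda(\mathfrak{h})$, i.e. $\mathfrak{h}'\subseteq\mathfrak{h}$ under the faithful identification of $\mathfrak{h}$ with $\lambda(\mathfrak{h})$. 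Combining this with $\mathfrak{g}=\mathfrak{h}'+\mathfrak{l}$ along the chain $\mathfrak{g}=\mathfrak{h}'+\mathfrak{l}\subseteq\mathfrak{h}+\mathfrak{l}\subseteq\mathfrak{g}$ forces $\mathfrak{g}=\mathfrak{h}+\mathfrak{l}$.

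The delicate points are not the linear algebra but the bookkeeping. One must ensure that $\mathcal{H}_p$ is non-trivial so that the identically-zero $2$-form is genuinely degenerate, which is automatic since $\dim\mathcal{H}_p=\dim M>0$. More importantly, the passage to the canonical connection needs the correct normalization of the curvature formula and the identification $\lambda\cong\mathrm{Ad}|_{\mathfrak{m}}$, together with the specific choice of fiber point $p=u_o$: at other points of the fiber the curvature algebra is only an $\mathrm{Ad}(G)$-conjugate of a subalgebra of $\lambda(\mathfrak{h})$, so $p=u_o$ is where the inclusion $\mathfrak{h}'\subseteq\lambda(\mathfrak{h})$ holds on the nose. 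I expect the main care to go into deducing $\Omega_{u_o}(X,Y)\in\lambda(\mathfrak{h})$ cleanly from Theorem \ref{thm:holonomy-alg}, while the contradiction argument itself is routine.
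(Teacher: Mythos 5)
Your proposal is correct and takes essentially the same approach as the paper: the first claim is the identical orthogonal-complement argument (you phrase it contrapositively, while the paper states directly that fatness forces $(\mathfrak{h}')^{\perp}\cap\mathfrak{l}^{\perp}=\{0\}$, equivalent to $\mathfrak{g}=\mathfrak{h}'+\mathfrak{l}$). For the canonical case the paper notes $\mathfrak{h}'\subset\mathfrak{h}^*\subset\lambda(\mathfrak{h})\cong\mathfrak{h}$ via the holonomy algebra and Theorem \ref{thm:holonomy-alg}, whereas you use the equivalent curvature formula $\Omega_{u_o}(X,Y)=-\lambda([X,Y]_{\mathfrak{h}})$ at the base frame; this is the same content, just evaluated pointwise rather than through the holonomy algebra.
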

\begin{proof}
Notice that the fatness condition  is equivalent to the condition that all 2-forms
$B_{\mathfrak{g}}(\Omega_p(X,Y),u)$
are non-degenerate for all $u\in\mathfrak{l}^{\perp}$,  all $p\in P$ and $X,Y\in \mathcal{H}_p$.  The latter implies
$(\mathfrak{h}')^{\perp} \cap \mathfrak{l}^{\perp}=\{ 0 \} .$
 But
$(\mathfrak{h}')^{\perp} \cap \mathfrak{l}^{\perp}=\{ 0 \} \ \textrm{ iff} \ \mathfrak{g}=\mathfrak{h}'+ \mathfrak{l}. $ We complete the proof by noticing that $\mathfrak{h}'\subset\mathfrak{h}^*$, where $\mathfrak{h}^*$ denotes the holonomy Lie algebra, and that for the canonical connection $\mathfrak{h}^*\subset \mathfrak{h}\cong\lambda (\mathfrak{h})$, by Theorem \ref{thm:holonomy-alg}.
\end{proof}
Let there be given a principal $G$-bundle
$G\rightarrow P \rightarrow K/H$
endowed with a principal $K$-invariant connection $\theta .$ Use Theorem \ref{thm:inv-connections}. On the Lie algebra level we get that a connection $\theta$ is given by a linear mapping $\Lambda:\mathfrak{k} \rightarrow \mathfrak{g}$ such that

\begin{itemize}
	\item $\Lambda (X)=X$ for every $X\in \mathfrak{h},$
	\item $\Lambda [X,Y]=[X,\Lambda (Y)]$ for every $X\in \mathfrak{h}$ and $Y\in \mathfrak{k}$.
\end{itemize}
Here we identify $X$ with  $\lambda(X)$.
Let $L\subset G$ be a closed subgroup and $A:=\mathfrak{l}^{\perp}\subset \mathfrak{g}$ be the orthogonal complement of $\mathfrak{l}$ in $\mathfrak{g}$ with respect to $B_{\mathfrak{g}}$ (where $B_{\mathfrak{g}}$ denotes an invariant, negative-definite 2-form on $\mathfrak{g}$). Let also $B:=\mathfrak{h}\cap \mathfrak{l}$ and $C:=(\mathfrak{h}\cap \mathfrak{l})^{\perp}_{\mathfrak{h}} \subset \mathfrak{h}$ be the orthogonal complement of $\mathfrak{h}\cap \mathfrak{l}$ in $\mathfrak{h}$ with respect to $B_{\mathfrak{g}}.$ Throughout this note we use the following notation: if $W=U\oplus V$, is a direct sum of vector spaces, the projection onto $V$ is denoted by $proj_V$. 

\begin{lemma} If $\mathfrak{g}=\mathfrak{h}+\mathfrak{l}$ then for any subspace $V\subset \mathfrak{k}$ the conditions $(1)$ and $(2)$ below are equivalent
\begin{equation}
\forall_{0\neq X\in A} \ \forall_{0\neq Y\in V} \ \exists_{0\neq Z\in V} \ B_{\mathfrak{g}} (X, [Y,Z]_{\mathfrak{h}})\neq 0.
\label{lr2}
\end{equation}
\begin{equation}
\forall_{0\neq X\in C} \ \forall_{0\neq Y\in V} \ \exists_{0\neq Z\in V} \ B_{\mathfrak{g}} (X, [Y,Z]_{\mathfrak{h}})\neq 0.
\label{lr3}
\end{equation}
\label{lemma1}
\end{lemma}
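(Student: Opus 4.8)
The plan is to reduce both conditions to a statement involving only $proj_{\mathfrak{h}}(X)$, and then to show that as $X$ ranges over $A$ its $\mathfrak{h}$-component ranges over exactly $C$. The starting observation is that for all $Y,Z$ the element $[Y,Z]_{\mathfrak{h}}$ lies in $\mathfrak{h}$, so by the defining property of the $B_{\mathfrak{g}}$-orthogonal projection onto $\mathfrak{h}$ we have $B_{\mathfrak{g}}(X,[Y,Z]_{\mathfrak{h}})=B_{\mathfrak{g}}(proj_{\mathfrak{h}}(X),[Y,Z]_{\mathfrak{h}})$. Hence the quantity appearing in both \eqref{lr2} and \eqref{lr3} depends on $X$ only through $proj_{\mathfrak{h}}(X)$, and the whole problem becomes a question of comparing the sets $\{proj_{\mathfrak{h}}(X):X\in A\}$ and $C$.

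The key algebraic fact I would establish is that $proj_{\mathfrak{h}}$ restricts to a linear isomorphism $A\xrightarrow{\ \sim\ }C$. For injectivity, note that $\ker\bigl(proj_{\mathfrak{h}}\bigr)=\mathfrak{h}^{\perp}$, so $\ker\bigl(proj_{\mathfrak{h}}|_A\bigr)=\mathfrak{l}^{\perp}\cap\mathfrak{h}^{\perp}=(\mathfrak{h}+\mathfrak{l})^{\perp}=\mathfrak{g}^{\perp}=\{0\}$, where the hypothesis $\mathfrak{g}=\mathfrak{h}+\mathfrak{l}$ is used together with non-degeneracy of $B_{\mathfrak{g}}$. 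For the inclusion $proj_{\mathfrak{h}}(A)\subseteq C$, take $X\in A=\mathfrak{l}^{\perp}$ and any $W\in\mathfrak{h}\cap\mathfrak{l}$; since $W\in\mathfrak{l}$ we get $B_{\mathfrak{g}}(proj_{\mathfrak{h}}(X),W)=B_{\mathfrak{g}}(X,W)=0$, so $proj_{\mathfrak{h}}(X)$ is $B_{\mathfrak{g}}$-orthogonal to $\mathfrak{h}\cap\mathfrak{l}$ inside $\mathfrak{h}$, i.e. it lies in $C$. Finally a dimension count upgrades this to equality: injectivity gives $\dim proj_{\mathfrak{h}}(A)=\dim A=\dim\mathfrak{g}-\dim\mathfrak{l}$, while the factorization identity $\dim\mathfrak{g}=\dim\mathfrak{h}+\dim\mathfrak{l}-\dim(\mathfrak{h}\cap\mathfrak{l})$ yields $\dim\mathfrak{g}-\dim\mathfrak{l}=\dim\mathfrak{h}-\dim(\mathfrak{h}\cap\mathfrak{l})=\dim C$, so $proj_{\mathfrak{h}}(A)=C$.

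With the isomorphism $proj_{\mathfrak{h}}\colon A\to C$ in hand, the equivalence is immediate and symmetric. For \eqref{lr2}$\Rightarrow$\eqref{lr3}, given $0\neq X'\in C$ I would pull it back to the unique $0\neq X\in A$ with $proj_{\mathfrak{h}}(X)=X'$ and apply \eqref{lr2} to $X$, using the first-paragraph identity to rewrite $B_{\mathfrak{g}}(X,[Y,Z]_{\mathfrak{h}})=B_{\mathfrak{g}}(X',[Y,Z]_{\mathfrak{h}})$. For the converse, given $0\neq X\in A$ I would set $X'=proj_{\mathfrak{h}}(X)$, which is nonzero by injectivity, apply \eqref{lr3} to $X'$, and transfer back by the same identity. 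In both directions $Y$ and $Z$ are untouched, so the inner quantifiers carry over verbatim.

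The step I expect to be the crux is the surjectivity $proj_{\mathfrak{h}}(A)=C$, since it is the only place where the factorization hypothesis $\mathfrak{g}=\mathfrak{h}+\mathfrak{l}$ is genuinely needed; the inclusion alone is cheap, and it is the dimension bookkeeping (driven by that hypothesis) that forces equality. Everything else is a formal transport of the two quantified conditions through the linear isomorphism, with the reduction to the $\mathfrak{h}$-component being a one-line consequence of $[Y,Z]_{\mathfrak{h}}\in\mathfrak{h}$.
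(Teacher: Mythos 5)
Your proof is correct and follows essentially the same route as the paper: both rest on the observation that for $X\in A$ the pairing $B_{\mathfrak{g}}(X,H)$, $H\in\mathfrak{h}$, only sees the projection of $X$ into $C$, and on the fact that this projection restricts to a linear isomorphism $A\rightarrow C$ (injectivity forced by $\mathfrak{g}=\mathfrak{h}+\mathfrak{l}$ and non-degeneracy of $B_{\mathfrak{g}}$, surjectivity by a dimension count), through which the two quantified conditions are transported. If anything, your organization is slightly cleaner --- the paper extracts injectivity of $proj_C|_{A}$ from condition (1) itself and proves the converse by contrapositive, whereas you establish the isomorphism unconditionally and transfer both directions symmetrically --- but the underlying argument is the same.
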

\begin{proof}
 We have the (orthogonal) direct sum decompositions $\mathfrak{h}=B\oplus C,$ $\mathfrak{g}=\mathfrak{h}\oplus \mathfrak{h}^{\perp}$ and thus $\mathfrak{g}=B\oplus C\oplus \mathfrak{h}^{\perp}$. For any vector $X\in \mathfrak{g}$, denote its $A$-component ($B,C$-component, etc) by $X_{A}$ ($X_{B},X_{C},$ etc).
Note that for $X\in A\subset \mathfrak{g}$ and $H\in \mathfrak{h}$ we have $H=H_{B}+H_{C}$ and
$$B_{\mathfrak{g}} (X,H)=B_{\mathfrak{g}} (X,H_{B}+H_{C})=B_{\mathfrak{g}} (X,H_{C})=B_{\mathfrak{g}} (X_{\mathfrak{h}^{\perp}}+X_{\mathfrak{h}},H_{C})=$$
$$B_{\mathfrak{g}} (X_{\mathfrak{h}},H_{C})=B_{\mathfrak{g}} (X^{B}_{\mathfrak{h}}+X^{C}_{\mathfrak{h}},H_{C})=B_{\mathfrak{g}} (X^{C}_{\mathfrak{h}},H_{C})=B_{\mathfrak{g}} (X_{C},H_{C})=$$
$$B_{\mathfrak{g}} (X_{C},H_{C}+H_{B})=B_{\mathfrak{g}} (X_{C},H)$$
where $X^{B}_{\mathfrak{h}}$ ($X^{C}_{\mathfrak{h}}$) is the $B$-component ($C$-component) of $X_{\mathfrak{h}}.$

Thus the condition (\ref{lr2}) implies that
\begin{equation}
\operatorname{Ker}(proj_C)|_{A}=\{ 0 \}
\label{lr4}
\end{equation}
But
$$\mathfrak{g}=\mathfrak{l}\oplus A=\mathfrak{l}\oplus C$$
are the direct sum decompositions and therefore $\dim A=\dim C$, so $proj_C|_{A}:A\rightarrow C$ is an isomorphism. Thus,
$$B_{\mathfrak{g}} (X, [Y,Z]_{\mathfrak{h}})=B_{\mathfrak{g}} (X_{C}, [Y,Z]_{\mathfrak{h}})=B_{\mathfrak{g}} (proj_{C}(X), [Y,Z]_{\mathfrak{h}})$$
therefore  condition (\ref{lr2}) implies  condition (\ref{lr3})..

Now we will show that condition (\ref{lr3}) implies  condition (\ref{lr2}). Assume that (\ref{lr2}) is not satisfied. Then there exists $0\neq X\in A$ and $0\neq Y\in V$ such that
$$\forall_{0\neq Z\in V} \ B_{\mathfrak{g}} (X, [Y,Z]_{\mathfrak{h}})=0$$
therefore
$$\forall_{0\neq Z\in V} \ B_{\mathfrak{g}} (X_{C}, [Y,Z]_{\mathfrak{h}})=0.$$
If $X_{C}\neq 0$ then the lemma is proved. Assume that $X_{C}=0$. As $X\in A$ thus
$$X=X_{C}+X_{\mathfrak{h}^{\perp}}=X_{\mathfrak{h}^{\perp}},$$
that is $0\neq X\in X_{\mathfrak{h}^{\perp}},$ so $X$ is a non-zero vector which is orthogonal to $\mathfrak{h}$ and to $\mathfrak{l}.$ This is impossible since $\mathfrak{g}=\mathfrak{h}+\mathfrak{l}.$
\end{proof}
\begin{lemma}
Let $\Lambda_{\mathfrak{h}}:\mathfrak{k}\rightarrow \mathfrak{h}$ be the linear operator defined by
$$\Lambda_{\mathfrak{h}} (X):=proj_{\mathfrak{h}} \circ \Lambda(X).$$ 
The map $\Lambda_{\mathfrak{h}}$ induces an invariant connection in the principal bundle $K\rightarrow K/H$. The  associated bundle 
$$G/L\rightarrow P/L\rightarrow K/H$$
is fat (with respect to an invariant connection) if and only if the bundle 
$$H/(H\cap L)\rightarrow K/L\rightarrow K/H$$
is fat.
\label{lemmabb}
\end{lemma}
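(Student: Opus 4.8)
The plan is to establish the lemma in two steps: first that $\Lambda_{\mathfrak{h}}$ is an invariant connection on $K\to K/H$, and then that the two fatness conditions are equivalent; throughout I may assume $\mathfrak{g}=\mathfrak{h}+\mathfrak{l}$, since by Proposition \ref{prop:onishchik} fatness of the associated bundle already forces this. For the first step I would check the two conditions of Theorem \ref{thm:inv-connections} for the structure group $H$. The normalization $\Lambda_{\mathfrak{h}}(X)=X$ for $X\in\mathfrak{h}$ is immediate, because $\Lambda(X)=X\in\mathfrak{h}$ and hence $proj_{\mathfrak{h}}\Lambda(X)=X$. For the equivariance $\Lambda_{\mathfrak{h}}([X,Y])=[X,\Lambda_{\mathfrak{h}}(Y)]$ with $X\in\mathfrak{h},\,Y\in\mathfrak{k}$, the key point is that $\mathfrak{h}^{\perp}$ is $ad_{\mathfrak{h}}$-invariant (because $B_{\mathfrak{g}}$ is invariant and $\mathfrak{h}$ a subalgebra), so that $proj_{\mathfrak{h}}[X,\,\cdot\,]=[X,proj_{\mathfrak{h}}(\,\cdot\,)]$ for $X\in\mathfrak{h}$; applying $proj_{\mathfrak{h}}$ to the identity $\Lambda([X,Y])=[X,\Lambda(Y)]$ then gives exactly what is needed.

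For the second step I would translate both fatness statements through Theorem \ref{thm:asso}. On the left, $G/L\to P/L\to K/H$ is fat iff $B_{\mathfrak{g}}(\Omega(X,Y),u)$ is non-degenerate on $\mathcal{H}\cong\mathfrak{m}$ for all $0\neq u\in\mathfrak{l}^{\perp}=A$. On the right, the homogeneous bundle is the bundle associated with $K\to K/H$ through $\Lambda_{\mathfrak{h}}$, so Theorem \ref{thm:asso} applied inside $\mathfrak{h}$ (with $B_{\mathfrak{g}}|_{\mathfrak{h}}$ as the invariant form) says it is fat iff $B_{\mathfrak{g}}(\Omega^{H}(X,Y),v)$ is non-degenerate on $\mathfrak{m}$ for all $0\neq v\in(\mathfrak{h}\cap\mathfrak{l})^{\perp}_{\mathfrak{h}}=C$, where $\Omega^{H}$ is the curvature of $\Lambda_{\mathfrak{h}}$. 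To connect the two I would compute both curvatures at $o$ from the Kobayashi--Nomizu formula and show that the $\mathfrak{h}$-component of $\Omega$ coincides with $\Omega^{H}$. In the situation of Theorem \ref{thm:main} this is transparent: for the canonical connection $\Lambda_{\mathfrak{m}}=0$, and by Theorem \ref{thm:holonomy-alg} one has $\Omega(X,Y)=\Omega^{H}(X,Y)=-\lambda([X,Y]_{\mathfrak{h}})\in\mathfrak{h}$, so the two curvatures literally agree and land in $\mathfrak{h}$.

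Once the relevant curvature is known to take values in $\mathfrak{h}$, the equivalence becomes exactly Lemma \ref{lemma1} with $V=\mathfrak{m}$. Indeed, the computation in the proof of Lemma \ref{lemma1} shows $B_{\mathfrak{g}}(u,W)=B_{\mathfrak{g}}(proj_{C}(u),W)$ for $u\in A$ and $W\in\mathfrak{h}$, and $proj_{C}|_{A}\colon A\to C$ is an isomorphism since $\mathfrak{g}=\mathfrak{l}\oplus A=\mathfrak{l}\oplus C$. Therefore non-degeneracy of $B_{\mathfrak{g}}(\Omega(\cdot,\cdot),u)$ for every $u\in A$ is equivalent to non-degeneracy of $B_{\mathfrak{g}}(\Omega(\cdot,\cdot),v)$ for every $v\in C$, which is precisely the equivalence $(\ref{lr2})\Leftrightarrow(\ref{lr3})$ with $V=\mathfrak{m}$; this yields the desired equivalence of the two fatness conditions. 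I expect the main obstacle to be the curvature step, namely verifying that only the $\mathfrak{h}$-component of $\Omega$ enters the fatness condition indexed by $\mathfrak{l}^{\perp}$, so that a non-canonical $\Lambda_{\mathfrak{m}}$ cannot contribute a spurious $\mathfrak{h}^{\perp}$-part; for the canonical connection, which is the case needed for Theorem \ref{thm:main}, this difficulty disappears and the statement reduces cleanly to Lemma \ref{lemma1}.
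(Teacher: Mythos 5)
Your proposal is correct and follows essentially the paper's own route: one first checks the two axioms of Theorem \ref{thm:inv-connections} for $\Lambda_{\mathfrak{h}}$ (your justification via $ad_{\mathfrak{h}}$-invariance of $\mathfrak{h}^{\perp}$, coming from invariance of $B_{\mathfrak{g}}$, is exactly what makes the paper's ``since $K/H$ is reductive'' step work), and then passes between the index sets $A=\mathfrak{l}^{\perp}$ and $C=(\mathfrak{h}\cap\mathfrak{l})^{\perp}_{\mathfrak{h}}$ via Lemma \ref{lemma1}, using that pairing against vectors of $C\subset\mathfrak{h}$ annihilates the $\mathfrak{h}^{\perp}$-component of the curvature. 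The caveat you flag about non-canonical connections is well taken rather than a defect of your argument: the paper's own proof applies Lemma \ref{lemma1} to the form $B_{\mathfrak{g}}(X,\Lambda([Y,Z]))$, whose values lie in $\mathfrak{h}$ only when $\Lambda_{\mathfrak{h}^{\perp}}=0$, so the transfer $B_{\mathfrak{g}}(X,\cdot)=B_{\mathfrak{g}}(proj_{C}(X),\cdot)$ underlying the $A\leftrightarrow C$ exchange is fully justified precisely in the ($\mathfrak{h}$-valued, e.g.\ canonical) case you isolate; since Lemma \ref{thm:sub-main} and Theorem \ref{thm:main} invoke the present lemma only for the canonical connection, your restricted proof covers everything the paper actually uses.
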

\begin{proof} Note that in the proof of this Lemma, for brevity,  we use a terminology which is somewhat not precise but clear: we say that $\Lambda$ or $\Lambda_{\mathfrak{h}}$ is fat, if it yields a fat associated bundle in our usual sense.
The condition $\Lambda_{\mathfrak{h}}(X)=X$ for $X\in \mathfrak{h}$ is obviously satisfied. Also
$$\Lambda([X,Y])=\Lambda_{\mathfrak{h}}([X,Y])+\Lambda_{\mathfrak{h}^{\perp}}([X,Y])$$
and
$$[X,\Lambda (Y)]=[X,\Lambda_{\mathfrak{h}}(Y)+\Lambda_{\mathfrak{h}^{\perp}}(Y)]=[X,\Lambda_{\mathfrak{h}}(Y)]+[X,\Lambda_{\mathfrak{h}^{\perp}}(Y)],$$
for $X\in \mathfrak{h}$ and $Y\in \mathfrak{k}.$ But $[X,\Lambda_{\mathfrak{h}}(Y)]\in \mathfrak{h}$ and $[X,\Lambda_{\mathfrak{h}^{\perp}}(Y)]\in \mathfrak{h}^{\perp}$ since  $K/H$ is reductive and, therefore, we obtain
$$\Lambda_{\mathfrak{h}}([X,Y])=[X,\Lambda_{\mathfrak{h}}(Y)],$$
as desired.

It follows from Lemma \ref{lemma1} that if $\Lambda$ is fat then
$$\forall_{0\neq X\in (\mathfrak{h} \cap \mathfrak{l})^{\perp}_{\mathfrak{h}}} \ \forall_{0\neq Y\in \operatorname{Ker}\Lambda} \ \exists_{Z\in \operatorname{Ker}\Lambda} \ B_{\mathfrak{g}}(X,\Lambda([Y,Z]))\neq 0.$$
But
$$B_{\mathfrak{g}}(X,\Lambda([Y,Z]))=B_{\mathfrak{g}}(X,\Lambda_{\mathfrak{h}}([Y,Z])+\Lambda_{\mathfrak{h}^{\perp}}([Y,Z]))=$$
$$B_{\mathfrak{g}}(X,\Lambda_{\mathfrak{h}}([Y,Z]))+B_{\mathfrak{g}}(X,\Lambda_{\mathfrak{h}^{\perp}}([Y,Z]))=B_{\mathfrak{g}}(X,\Lambda_{\mathfrak{h}}([Y,Z]))$$
and so if $\Lambda$ is fat then
$$\forall_{0\neq X\in (\mathfrak{h} \cap \mathfrak{l})^{\perp}_{\mathfrak{h}}} \ \forall_{0\neq Y\in \operatorname{Ker}\Lambda} \ \exists_{Z\in \operatorname{Ker}\Lambda} \ B_{\mathfrak{g}}(X,\Lambda_{\mathfrak{h}}([Y,Z]))\neq 0.$$
On the other hand if $\Lambda_{\mathfrak{h}}$ is fat then again by Lemma \ref{lemma1} it is fat as a map $\mathfrak{k} \rightarrow \mathfrak{g}$.
\end{proof}
Now we are ready to get the main part of the proof of Theorem \ref{thm:main}. 
\begin{lemma}\label{thm:sub-main} Let $G\rightarrow P\rightarrow K/H$ be a  $G$-structure over a homogeneous space $K/H$ of a compact Lie group $K$. Assume that $G$ is compact. An associated bundle 
$$G/L\rightarrow P/L\rightarrow K/H$$
is fat with respect to a connection metric determined by the $K$-invariant canonical connection if and only if the following three conditions are satisfied:
\begin{enumerate}
\item $G=HL$,
\item $K/H$ is a Riemannian symmetric space;
\item the pair ($\mathfrak{h},\mathfrak{h}\cap\mathfrak{l})$ is a B\'erard-Bergery pair.
\end{enumerate}
\end{lemma}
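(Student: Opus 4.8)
The plan is to prove Lemma \ref{thm:sub-main} by combining the structural reductions already established (Propositions \ref{prop:onishchik}, Lemma \ref{lemma1}, Lemma \ref{lemmabb}) with the two classification-type inputs: Proposition \ref{prop:bb} (the B\'erard--Bergery symmetry/rank constraint) and Theorem \ref{thm:onishchik}. I would organize the argument as a chain of equivalences, establishing the three conditions essentially in the order $(1)$, $(3)$, $(2)$, since $(1)$ is the cleanest consequence and the B\'erard--Bergery pair condition $(3)$ is the heart of the matter.

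First I would establish condition $(1)$, the factorization $G = HL$. For the forward direction, this is exactly the content of Proposition \ref{prop:onishchik}: fatness with respect to the \emph{canonical} connection forces $\mathfrak{g} = \mathfrak{h} + \mathfrak{l}$, and since $G$ and the relevant subgroups are compact and connected, the Lie-algebra factorization $\mathfrak{g}=\mathfrak{h}+\mathfrak{l}$ lifts to the group factorization $G=HL$ by the Onishchik result quoted in Section 2.3. This is the step that justifies the hypothesis $\mathfrak{g}=\mathfrak{h}+\mathfrak{l}$ needed to apply Lemma \ref{lemma1} and Lemma \ref{lemmabb}.

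Next, with $\mathfrak{g}=\mathfrak{h}+\mathfrak{l}$ in hand, I would invoke Lemma \ref{lemmabb}: the canonical connection on the $G$-structure projects (via $\Lambda_{\mathfrak{h}} = proj_{\mathfrak{h}}\circ\Lambda$, which for the canonical connection is itself canonical on $K\to K/H$) to an invariant connection on $K\to K/H$, and the associated bundle $G/L\to P/L\to K/H$ is fat if and only if the homogeneous bundle $H/(H\cap L)\to K/L\to K/H$ is fat. This is precisely the reformulation that makes the situation fall within the scope of B\'erard--Bergery. Here the key technical point is that fatness is being expressed through the bracket condition (\ref{lr2})--(\ref{lr3}), and Lemma \ref{lemma1} lets me replace the complement $A=\mathfrak{l}^\perp$ by the intrinsic complement $C=(\mathfrak{h}\cap\mathfrak{l})^\perp_{\mathfrak{h}}$ inside $\mathfrak{h}$, so that the fatness of the projected bundle is a condition living entirely inside $\mathfrak{h}$. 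I would then read off condition $(3)$: the bundle $H/(H\cap L)\to K/L\to K/H$ being a fat homogeneous bundle means, by the very definition of B\'erard--Bergery bundles and pairs in the Introduction, that $(\mathfrak{h},\mathfrak{h}\cap\mathfrak{l})$ is a B\'erard--Bergery pair. Finally, condition $(2)$ follows because the homogeneous fat bundle $H/(H\cap L)\to K/L\to K/H$ satisfies the hypotheses of Proposition \ref{prop:bb}, which forces $K/H$ to be a Riemannian symmetric space (the rank equality $\operatorname{rank}K=\operatorname{rank}H$ being recorded for the later refinement in Theorem \ref{thm:main}). The converse direction runs the same equivalences backwards: if $(1)$ holds then Lemma \ref{lemmabb} applies, and if additionally $(3)$ holds then the projected homogeneous bundle is fat, hence so is the original associated bundle.

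The step I expect to be the main obstacle is the precise identification in condition $(3)$, namely pinning down that the fatness of the \emph{projected} homogeneous bundle $H/(H\cap L)\to K/L\to K/H$ coincides exactly with the B\'erard--Bergery pair condition for $(\mathfrak{h},\mathfrak{h}\cap\mathfrak{l})$ rather than some weaker or connection-dependent statement. The subtlety is that B\'erard--Bergery classifies fat homogeneous bundles for \emph{any} invariant connection, whereas here I am handed the specific connection $\Lambda_{\mathfrak{h}}$ coming from the canonical connection of the $G$-structure; I must verify that the bracket-based fatness inequality surviving the projection (after the reduction from $A$ to $C$ via Lemma \ref{lemma1}) is genuinely the condition characterizing a B\'erard--Bergery pair, and in particular that it does not secretly depend on data outside $\mathfrak{h}$. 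Once that identification is secured, the symmetric-space conclusion $(2)$ is an immediate application of the quoted Proposition \ref{prop:bb} and requires no further work.
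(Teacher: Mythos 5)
Your proposal is correct and follows essentially the same route as the paper's own proof: Proposition \ref{prop:onishchik} gives $\mathfrak{g}=\mathfrak{h}+\mathfrak{l}$ (hence $G=HL$ for compact groups), Lemma \ref{lemmabb} reduces fatness of $G/L\rightarrow P/L\rightarrow K/H$ to fatness of the homogeneous bundle $H/(H\cap L)\rightarrow K/L\rightarrow K/H$, and then Proposition \ref{prop:bb} together with the definition of B\'erard-Bergery pairs yields conditions (2) and (3). Your closing worry about connection-dependence is reasonable, but the paper resolves it exactly as you suggest (the projected connection $\Lambda_{\mathfrak{h}}$ of the canonical connection is itself canonical, and a fat invariant connection on the homogeneous bundle is precisely what the definition of a B\'erard-Bergery bundle requires), so no extra argument is needed beyond what you outline.
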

\begin{proof}
By proposition \ref{prop:onishchik}, we have $\mathfrak{g}=\mathfrak{h}+\mathfrak{l}$. It follows from Lemma \ref{lemmabb} that
$$G/L \rightarrow P/L \rightarrow K/H$$
is fat if and only if the homogeneous bundle $H/H\cap L\rightarrow K/L\rightarrow K/H$ is fat, hence, $K/H$ must be symmetric by Proposition \ref{prop:bb}, the triple $(K,H,L)$ must be the B\'erard-Bergery triple, and the pair $(H,H\cap L)$ must be the B\'erard-Bergery pair.
\end{proof}
\subsection{Proof of Theorem \ref{thm:main}}
We complete the proof in two steps. Assume, first, that $G$ is simple.
If $G$ is simple then $G/L \rightarrow P/L \rightarrow K/H$ is fat if and only if it is B\'erard-Bergery bundle. Indeed, by Lemma \ref{thm:sub-main} $\mathfrak{g}=\mathfrak{h}+\mathfrak{l}$ and the pair $(\mathfrak{h},\mathfrak{h}\cap\mathfrak{l})$ is a B\'erard-Bergery pair. Thus, either $(\mathfrak{g},\mathfrak{h},\mathfrak{l})$ is a factorization or $\mathfrak{g}=\mathfrak{h}$. In the first case, the triple $(\mathfrak{g},\mathfrak{h},\mathfrak{l})$ and $\mathfrak{h}\cap\mathfrak{l}$ are contained in Table 1.
 However, the comparison of Table 1 and Table 2 shows that none of the pairs $(\mathfrak{h},\mathfrak{h}\cap\mathfrak{l})$ is contained in both tables. 
Hence, $G=H$. We get the commutative diagram
$$
\CD
H @>{=}>> H\\
@VVV @VVV\\
K @>{i}>> P\\
@VVV @VVV\\
K/H @>{=}>>K/H
\endCD
$$
where $i$ denotes the evaluation map. Clearly, under the conditions of Theorem \ref{thm:main}, we get an isomorphism of the principal bundles. This follows since the isotropy representation is faithful, and, therefore, $i$ must be an embedding, hence a diffeomorphism. This completes the proof when $G$ is simple. Note that the argument also goes through for any $\mathfrak{g}=\mathfrak{h}$. 
 In the general case, the proof follows from Lemma \ref{lemma:extension} below. We need to consider the case when $\mathfrak{g}$ is not simple, but $(\mathfrak{g},\mathfrak{h},\mathfrak{l})$ is a factorization.  

\begin{lemma}\label{lemma:extension}
Under the assumptions of Theorem \ref{thm:main}, the following holds
$$\mathfrak{g}=\mathfrak{h}'+\mathfrak{l}, \  \mathfrak{g}=\mathfrak{h}'+\mathfrak{n} \ \textrm{and} \ \mathfrak{l}=\mathfrak{h}'+\mathfrak{n}.$$
\end{lemma}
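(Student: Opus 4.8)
The plan is to reduce the semisimple factorization to the simple factorizations already disposed of, and then read off the ideal decomposition. Recall that by Lemma \ref{thm:sub-main} we are in the remaining case: $\mathfrak{g}$ is compact semisimple but \emph{not} simple, $(\mathfrak{g},\mathfrak{h},\mathfrak{l})$ is a genuine factorization, $(\mathfrak{h},\mathfrak{h}\cap\mathfrak{l})$ is a B\'erard-Bergery pair, and $K/H$ is symmetric. Since $\mathfrak{n}'\subseteq\mathfrak{h}\cap\mathfrak{l}\subseteq\mathfrak{l}$ and $\mathfrak{h}=\mathfrak{h}'\oplus\mathfrak{n}'$, the first relation is immediate: $\mathfrak{g}=\mathfrak{h}+\mathfrak{l}=\mathfrak{h}'+\mathfrak{n}'+\mathfrak{l}=\mathfrak{h}'+\mathfrak{l}$. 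Thus the real content lies in locating everything in $\mathfrak{l}$ that is \emph{not} captured by $\mathfrak{h}'$ inside the maximal common ideal $\mathfrak{n}$ of $\mathfrak{g}$ and $\mathfrak{l}$, after which the remaining relations among $\mathfrak{g}$, $\mathfrak{h}'$, $\mathfrak{l}$ and $\mathfrak{n}$ follow by substitution, yielding the decomposition $\mathfrak{g}=\mathfrak{h}'\oplus\mathfrak{n}$ of Theorem \ref{thm:main}(3).

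First I would decompose $\mathfrak{g}=\mathfrak{g}_1\oplus\cdots\oplus\mathfrak{g}_s$ into simple ideals and let $\pi_i:\mathfrak{g}\to\mathfrak{g}_i$ be the (orthogonal, hence Lie-algebra-homomorphic) projections. Applying $\pi_i$ to $\mathfrak{g}=\mathfrak{h}+\mathfrak{l}$ gives $\mathfrak{g}_i=\pi_i(\mathfrak{h})+\pi_i(\mathfrak{l})$, so each index falls into exactly one of three cases: (i) $\pi_i(\mathfrak{h})=\mathfrak{g}_i$; (ii) $\pi_i(\mathfrak{l})=\mathfrak{g}_i$ with $\pi_i(\mathfrak{h})\subsetneq\mathfrak{g}_i$; or (iii) $(\mathfrak{g}_i,\pi_i(\mathfrak{h}),\pi_i(\mathfrak{l}))$ is a genuine factorization of the simple algebra $\mathfrak{g}_i$, and therefore appears in Table 1 by Theorem \ref{thm:onishchik}. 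I would then single out the indices with $\mathfrak{g}_i\subseteq\mathfrak{l}$: their sum is precisely $\mathfrak{n}$, because an ideal of a semisimple $\mathfrak{g}$ is a sum of simple factors, and any simple factor lying in $\mathfrak{l}$ is automatically a common ideal of $\mathfrak{g}$ and $\mathfrak{l}$.

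The crux is to show that every remaining factor (those with $\mathfrak{g}_i\not\subseteq\mathfrak{l}$) is absorbed into the complement $\mathfrak{h}'$, i.e. that $\mathfrak{h}$ projects onto it and that, modulo $\mathfrak{n}'$, no essential factorization survives. Here I would feed in the B\'erard-Bergery constraint exactly as in the simple case: the global pair $(\mathfrak{h},\mathfrak{h}\cap\mathfrak{l})$ must occur in the B\'erard-Bergery list (Table 2), whereas case (iii) forces the corresponding local pair into Table 1; the comparison of Table 1 and Table 2 already carried out for simple $\mathfrak{g}$ shows these lists are disjoint, which rules out case (iii). Since $\mathfrak{n}'$ is by maximality exactly the sum of the simple factors of $\mathfrak{h}$ that also lie in $\mathfrak{h}\cap\mathfrak{l}$, stripping it off identifies $\mathfrak{h}'$ with the projection of $\mathfrak{h}$ onto $\bigoplus_{\mathfrak{g}_i\not\subseteq\mathfrak{l}}\mathfrak{g}_i$; assembling the cases then gives the decomposition of $\mathfrak{g}$ into the $\mathfrak{h}'$-part and the ideal $\mathfrak{n}$, and hence the stated relations for $\mathfrak{l}$ and $\mathfrak{g}$.

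The step I expect to be the main obstacle is controlling \emph{diagonal and mixed embeddings}: when $\mathfrak{h}$ projects onto several simple factors at once, the equality $\pi_i(\mathfrak{h})=\mathfrak{g}_i$ may be realized through a diagonal copy rather than a direct summand, so the passage from ``$\pi_i(\mathfrak{h})=\mathfrak{g}_i$ for the relevant $i$'' to an honest splitting $\mathfrak{g}=\mathfrak{h}'\oplus\mathfrak{n}$ with $\mathfrak{h}'\cap\mathfrak{n}=\{0\}$ is not automatic. Resolving this requires Onishchik's structure theory for factorizations of \emph{semisimple} (not merely simple) compact Lie algebras, used to verify that, once the common ideal $\mathfrak{n}'$ is removed, the B\'erard-Bergery constraint leaves no genuine diagonal gluing between a factor inside $\mathfrak{n}$ and a factor meeting $\mathfrak{h}'$, so that the sum $\mathfrak{h}'+\mathfrak{n}$ is direct and the decomposition is exactly as claimed.
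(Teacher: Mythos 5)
Your first relation $\mathfrak{g}=\mathfrak{h}'+\mathfrak{l}$ and your identification of $\mathfrak{n}$ with the sum of the simple ideals of $\mathfrak{g}$ contained in $\mathfrak{l}$ agree with the paper, but the core of your argument has two genuine gaps. First, the per-factor elimination of case (iii) does not work as stated: the disjointness of Tables 1 and 2 is a statement about the pair $(\mathfrak{h},\mathfrak{h}\cap\mathfrak{l})$, whereas what a factorization of the simple factor $\mathfrak{g}_i$ places in Table 1 is the pair $\bigl(\pi_i(\mathfrak{h}),\,\pi_i(\mathfrak{h})\cap\pi_i(\mathfrak{l})\bigr)$. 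This is not the pair that Lemma \ref{thm:sub-main} constrains to lie in Table 2: $\pi_i(\mathfrak{h})$ is only a homomorphic image of $\mathfrak{h}$, and $\pi_i(\mathfrak{h})\cap\pi_i(\mathfrak{l})$ can be strictly larger than $\pi_i(\mathfrak{h}\cap\mathfrak{l})$ --- exactly in the diagonal situations you worry about later. So Table 2 membership of the global pair yields no contradiction with Table 1 membership of the local pair, and case (iii) is not excluded by this comparison. Second, you correctly single out the diagonal-gluing problem as ``the main obstacle'' but then dispose of it by citing ``Onishchik's structure theory for factorizations of semisimple compact Lie algebras'' as a black box, without saying which statement is used or how. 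That step is not a technicality; it is precisely the content of the lemma, so as written the proposal assumes what it must prove.

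For comparison, the paper closes exactly this gap by a more global mechanism that never decomposes $\mathfrak{g}$ into simple factors. From Table 2 one reads off that $\mathfrak{h}'$ is simple or $\mathbb{R}\oplus\mathfrak{su}(2)$ (the latter possibility being dismissed by a simple-connectivity remark about the circle factor). Then the relation $\mathfrak{g}=\mathfrak{h}'+\mathfrak{l}$ is interpreted as transitivity of $H'$ on $G/L$, i.e.\ the total space of the fiber is the coset space $H'/(H\cap L)'$, a space on the B\'erard-Bergery list. Onishchik's classification of compact groups acting transitively and effectively on these particular spaces (the Corollaries to Theorems 5.1 and 6.1 of \cite{O}) shows that only $H'$ acts effectively there; hence the excess of $\mathfrak{g}$ over $\mathfrak{h}'$ acts trivially and sits inside an ideal $\mathfrak{n}\subset\mathfrak{l}$, giving $\mathfrak{g}=\mathfrak{h}'+\mathfrak{n}$ and $\mathfrak{l}=\mathfrak{h}'+\mathfrak{n}$ at once. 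If you wish to salvage your factor-by-factor scheme, you would have to import this effectivity result (or an equivalent statement about semisimple factorizations) at the precise point where your proposal currently gestures at it.
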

\begin{proof}
We know that $\mathfrak{g}=\mathfrak{h}+\mathfrak{l}$ and $\mathfrak{h}=\mathfrak{h}'+\mathfrak{n}'.$ But $\mathfrak{n}'\subset \mathfrak{h}\cap \mathfrak{l}$ and therefore
$$\mathfrak{g}=\mathfrak{h}'+\mathfrak{l}.$$ 
It follows from Table 2 that $\mathfrak{h}'$ is a simple Lie algebra or $\mathfrak{h}'=\mathbb{R}+\mathfrak{su}(2).$ Take a connected Lie subgroup $H'$ corresponding to $\mathfrak{h}'$ (it exists if $\mathfrak{h}'$ is simple; if $\mathfrak{h}'$ is not simple then $\mathbb{R}$ corresponds to a circle and $H'/(H\cap)L$ is not simply connected). 
So assume that $H'$ is simple. In this case it follows from Table 2 and (see comments to Corollary to Theorem 5.1 and Corollary to Theorem 6.1 in \cite{O}) that only $H'$ acts effectively on $H'/(H\cap L)'$ and therefore $\mathfrak{g}=\mathfrak{h}'+\mathfrak{n}$ and $\mathfrak{l}=\mathfrak{h}'+\mathfrak{n}$ for some ideal $\mathfrak{n} \subset \mathfrak{g}.$
\end{proof}
Now, we complete the proof of Theorem \ref{thm:main} by applying Lemma  \ref{thm:sub-main} and Lemma \ref{lemma:extension}.

\section{Addendum: the B\'erard-Bergery pairs}
\subsection{Table 2}
For the convenience of the reader we reproduce a part of the B\'erad-Bergery classification \cite{BB} (see explanations below Table 2). 
\vskip6pt
\begin{center}
\renewcommand{\arraystretch}{1.2}
\begin{tabular}{|c|c|c|}
\hline
$\mathfrak{h}$ & $\mathfrak{h} \cap \mathfrak{l}$  & restrictions \\
\hline
\hline
$A_{n-2} \oplus \mathbb{R} \oplus A_{1}$ & $A_{n-2} \oplus \mathbb{R} \oplus \mathbb{R}_{1}$ & $n \geq 2$ \\
\hline
$B_{n-2} \oplus A_{1} \oplus A_{1}$ & $B_{n-2} \oplus A_{1} \oplus \mathbb{R}_{1}$ & $n \geq 2$ \\
\hline
$C_{n-1} \oplus A_{1}$ & $C_{n-1} \oplus \mathbb{R}_{1}$ & $n \geq 2$ \\
\hline
$D_{n-2} \oplus A_{1} \oplus A_{1}$ & $D_{n-2} \oplus A_{1} \oplus \mathbb{R}_{1}$ & $n \geq 3$ \\
\hline
$A_{5} \oplus A_{1}$ & $A_{5} \oplus \mathbb{R}_{1}$ & \\
\hline
$D_{6} \oplus A_{1}$ & $D_{6} \oplus \mathbb{R}_{1}$ & \\
\hline
$E_{7} \oplus A_{1}$ & $E_{7} \oplus \mathbb{R}_{1}$ & \\
\hline
$C_{3} \oplus A_{1}$ & $C_{3} \oplus \mathbb{R}_{1}$ & \\
\hline
$\overline{A}_{1} \oplus A_{1}$ & $\overline{A}_{1} \oplus \mathbb{R}_{1}$ & \\
\hline
$A_{1} \oplus \overline{A}_{1}$ & $A_{1} \oplus \mathbb{R}_{1}$ & \\
\hline
$A_{n-4} \oplus \mathbb{R} \oplus A_{3}$ & $A_{n-4} \oplus \mathbb{R} \oplus \overline{C}_{2}$ & $n \geq 4$ \\
\hline
$B_{n-4} \oplus D_{4}$ & $B_{n-4} \oplus \overline{B}_{3}$ & $n \geq 4$ \\
\hline
$C_{n-2} \oplus C_{2}$ & $C_{n-2} \oplus A_{1} \oplus A_{1}$ & $n \geq 3$ \\
\hline
$D_{n-4} \oplus D_{4}$ & $D_{n-4} \oplus \overline{B}_{3}$ & $n\geq 5$ \\
\hline
$\mathbb{R} \oplus D_{5}$ & $\mathbb{R} \oplus B_{4}$ & \\
\hline
$A_{1} \oplus D_{6}$ & $A_{1} \oplus B_{5}$ & \\
\hline
$D_{8}$ & $B_{7}$ & \\
\hline
$B_{4}$ & $D_{4}$ & \\
\hline
$A_{n-2} \oplus \mathbb{R} \oplus A_{1}$ & $A_{n-2} \oplus \mathbb{R}_{a}$ & $n \geq 2$ \\
\hline
$\mathbb{R} \oplus A_{1} \oplus A_{1}$ & $A_{1} \oplus \mathbb{R}_{a}$ & \\ 
\hline
\end{tabular}
\renewcommand{\arraystretch}{1.0}
\end{center}
\vskip6pt
\centerline{TABLE 2}

\subsection{Explanations}
We write down only the B\'erard-Bergery pairs 
$(\mathfrak{h},\mathfrak{h}\cap\mathfrak{l})$ which come from the {\it semisimple} Lie algebras $\mathfrak{k}$. The symbols $A_1$ and $\bar A_1$ denote subalgebras associated to the highest and the lowest root.
Note that our notation differs from that of \cite{BB}: the triples $(\mathfrak{k},\mathfrak{h},\mathfrak{h}\cap \mathfrak{l})$ are denoted by $(\mathfrak{g},\mathfrak{h},\mathfrak{k})$ in \cite{BB}. Also, we have reproduced only cases (A), (C) and (D) from the classification. Cases (B), (E) and (F) are not essential in our considerations. This follows from the description of the classification in \cite{BB}:
\begin{enumerate}
\item case (B) is obtained from case (A) by removing $\mathbb{R}_1$ from $\mathfrak{h}\cap\mathfrak{l}$
\item case (E) consists of the pairs 
$$(A_1'\oplus\mathfrak{h}'\oplus A_1''\oplus\mathfrak{h}'',\bar A_1\oplus\mathfrak{h}'\oplus\mathfrak{h}''),$$
where $(A_1'\oplus\mathfrak{h}',\mathfrak{h}')$ is of type (B), and $(A_1''\oplus\mathfrak{h}'',\mathfrak{h}'')$ is either of type (B), or $(\mathfrak{h},\mathfrak{h}\cap\mathfrak{l})$ has the form $(A_1\oplus A_1,\bar A_1)$, where $\bar A_1$ is a diagonal subalgebra in $A_1\oplus A_1$.
\item case (F) is eliminated by the assumption that $\mathfrak{k}$ is semisimple,
\item the dual triples in case (C) yield the same pair $(\mathfrak{h},\mathfrak{h}\cap\mathfrak{l})$ (see Lemma 12 in \cite{BB}).
\end{enumerate}
\vskip6pt

\begin{remark} {\rm We consider only the case of the canonical connection in a $G$-structure. We conjecture that the same holds for any invariant connection, but the problem seems to be much more difficult. It would be interesting to try some other better known invariant connections, for example, metric connections described in spirit of \cite{AFF}.}
\end{remark}
\vskip6pt
\noindent {\bf Acknowledgement.} We thank Ilka Agricola for answering our questions and Jarek K\c edra for valuable discussions.

MB, AS, AT, AW:  Department of Mathematics and Computer Science
 \vskip6pt
 University of Warmia and Mazury
 \vskip6pt
 S\l\/oneczna 54, 10-710 Olsztyn, Poland
 \vskip6pt
 e-mail addresses:
 \vskip6pt
 MB: mabo@matman.uwm.edu.pl
 \vskip6pt
 AS: anna.szczepkowska@matman.uwm.edu.pl
 \vskip6pt
 AT: tralle@matman.uwm.edu.pl
 \vskip6pt
 AW: awoike@matman.uwm.edu.pl
 \vskip6pt

\end{document}